\newtheorem{teorema}{Theorem}
\newtheorem{lemma}[teorema]{Lemma}
\title{Determining the Locating Rainbow Connection Number of Vertex-Transitive Graphs}
\author{Ariestha Widyastuty Bustan$^{1,3}$\\ A.N.M. Salman$^{2}$ \\ Pritta Etriana Putri$^{2}$}
\date{}
\begin{document}

\maketitle

\begin{center}
	\textit{$^1$Doctoral Program of Mathematics, Faculty of Mathematics and Natural Sciences, Institut Teknologi Bandung, Bandung, Indonesia} \\
	\textit{$^2$Combinatorial Mathematics Research Group, Faculty of Mathematics and Natural Sciences, Institut Teknologi Bandung, Bandung, Indonesia} \\
	\textit{$^3$Mathematics Department, Faculty of Mathematics and Natural Sciences, Universitas Pasifik Morotai, Kabupaten Pulau Morotai, Indonesia} \\
\end{center}
\begin{center}
\texttt{30119004@mahasiswa.itb.ac.id} 
\end{center}

\begin{abstract}
The locating rainbow connection number of a graph is defined as the minimum number of colors required to color vertices such that every two vertices there exists a rainbow vertex path and every vertex has a distinct rainbow code. This rainbow code signifies a distance between vertices within a given set of colors in a graph. This paper aims to determine the locating rainbow connection number for vertex-transitive graphs. Three main theorems are derived, focusing on the locating rainbow connection number for some vertex-transitive graphs.
\end{abstract}

\section{Introduction}
In the theory of graph domains, there is a concept known as chromatic coloring. According to this concept, with $G=(V(G),E(G))$ being a finite, undirected, and connected graph, the chromatic coloring of $G$ involves assigning colors to vertices in such a way that no two adjacent vertices share the same color. To describe the minimum number of colors required for such a chromatic coloring of $G$, we utilize the symbol $\chi(G)$, which represents the chromatic number of $G$.

In addition to the concept of locating chromatic number, Chartrand et al. \cite{chartrand2002locating} also introduced the concept of rainbow coloring in 2008. This concept was inspired by the undercover communication techniques employed by government agencies to guarantee the secure transfer of classified information, especially in the aftermath of the 9/11
attacks in 2001 \cite{ericksen2007matter}. Since then, this concept has been widely studied, involving a variety of graph operations and graph classes (e.g.,\cite{fitriani2022rainbow}, \cite{kumala2015rainbow}, \cite{nabila2015rainbow}, \cite{susanti1ab2020rainbow}, and \cite{umbara2023inverse}). Motivated by the concept of rainbow coloring, in 2010, Krivelevich and Yuster introduced rainbow vertex coloring of a graph \cite{krivelevich2010rainbow}. Following this, the rainbow vertex connection
number of several classes of graphs has been a focus of several studies (e.g., \cite{bustan2018rainbow} and \cite{simamora2015rainbow}.

Motivated by the concepts of rainbow vertex coloring and dimension partition, a concept that combines both, called locating rainbow coloring of a graph, was introduced in 2021 \cite{bustan2021locating}. For a natural number $k$, a coloring of the vertex set of $G$ is termed a \textit{rainbow vertex k-coloring} if there exists a function $c:V(G)\longrightarrow \{1,2,...,k\}$ such that, for any distinct pair of vertices $x,y\in V(G)$, there is \textit{rainbow vertex $x-y$-path}, whose internal vertices are assigned a different color. The \textit{rainbow vertex connection number} of $G$, denoted by $rvc(G)$, is the smallest positive integer $k$, so $G$ has a rainbow vertex $k$-coloring.  For $i \in \{1,2,...,k\}$, let $R_i$ denote the set of vertices that have the color $i$ and $\Pi=\{R_1,R_2,...,R_k \}$ be an ordered partition of $V(G)$. Thus, $rc_{\Pi} (v)=(d(v,R_1 ),d(v,R_2 ),...,d(v,R_k ))$, where $d(v,R_i )=\min\{ d(v,y):y\in R_i \}$ for every $i\in \{1,2,...,k\}$. Further, $rc_{\Pi} (v)$ is called the \textit{rainbow code} of $v$ of $G$ with respect to $\Pi$. If $rc_{\Pi} (v_j )\neq rc_{\Pi} (v_l )$ for distinct $j,l \in \{1,2,...,n\}$, then the coloring $c$ is known as a \textit{locating rainbow $k$-coloring} of $G$. The smallest positive integer $k$ for which a locating rainbow $k$-coloring exists in the graph $G$, denoted by $rvcl(G)$, represents the locating rainbow connection number of graph $G$ \cite{bustan2021locating}. It is important to note that every locating rainbow $k$-coloring of $G$ also serves as a rainbow vertex coloring of $G$, implying that
\begin{equation}
	rvc(G)\leq rvcl(G).
	\label{eq1}
\end{equation}

Several results regarding the $rvcl(G)$ can be found in \cite{bustan2021locating}, \cite{bustan2023locating}, and \cite{bustan1} with some required results in this paper as follows.

\begin{lemma}\cite{bustan2021locating}
	Let $c$ be a locating rainbow coloring of $G$. Let $u$ and $v$ be two distinct vertices of $G$. If $d(u,x)=d(v,x)$ for all $x \in V(G)-\{u,v\}$, then $c(u)\neq c(v)$.
	\label{lemma1}
\end{lemma}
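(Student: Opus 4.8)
The plan is to show the contrapositive is impossible — that is, if $c(u) = c(v)$ under the hypothesis that $d(u,x) = d(v,x)$ for all $x \in V(G)-\{u,v\}$, then the rainbow codes of $u$ and $v$ coincide, contradicting that $c$ is a locating rainbow coloring. So suppose $c(u) = c(v) = j$ for some color $j$. I want to prove $rc_{\Pi}(u) = rc_{\Pi}(v)$, i.e., $d(u, R_i) = d(v, R_i)$ for every color class $R_i$, $i \in \{1, 2, \dots, k\}$.

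**First I would** handle the colors $i \neq j$. For such $i$, every vertex of $R_i$ is distinct from both $u$ and $v$ (since $u, v \in R_j$), so $R_i \subseteq V(G) - \{u, v\}$. Then
\[
d(u, R_i) = \min\{d(u, x) : x \in R_i\} = \min\{d(v, x) : x \in R_i\} = d(v, R_i),
\]
using the hypothesis term by term. **Next I would** handle the color $i = j$. Here both $u$ and $v$ lie in $R_j$, so $d(u, R_j) = 0 = d(v, R_j)$ trivially (the minimum is achieved at the vertex itself). Combining the two cases, $d(u, R_i) = d(v, R_i)$ for all $i$, hence $rc_{\Pi}(u) = rc_{\Pi}(v)$ with $u \neq v$, contradicting the definition of a locating rainbow coloring. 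Therefore $c(u) \neq c(v)$.

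**The main obstacle**, such as it is, is really just bookkeeping: one must be careful that the set over which the minimum is taken in the definition of $d(u, R_i)$ is exactly $R_i$, and that the hypothesis $d(u,x) = d(v,x)$ is only assumed for $x \notin \{u,v\}$ — so the case $i = j$ genuinely needs the separate (trivial) argument rather than an appeal to the hypothesis. There is also the edge case where some $R_i$ with $i \neq j$ could in principle be empty; but in the partition $\Pi$ the color classes realized by $c$ are nonempty by construction, and in any event an empty class contributes no constraint, so this does not affect the argument. No deeper structural property of $G$ is needed — the statement is essentially immediate from the definitions once the two cases are separated.
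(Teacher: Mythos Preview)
Your proof is correct. The paper does not actually supply its own proof of this lemma; it is quoted from \cite{bustan2021locating} and used as a black box, so there is nothing in the present paper to compare against. Your argument is the natural one: assuming $c(u)=c(v)=j$, you split into the cases $i\neq j$ (where $R_i\subseteq V(G)\setminus\{u,v\}$ and the hypothesis applies directly to each term of the minimum) and $i=j$ (where both distances are trivially $0$), concluding $rc_\Pi(u)=rc_\Pi(v)$, a contradiction. This is exactly how one would expect the cited source to prove it as well.
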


\begin{lemma}\cite{bustan2023locating}
If $G$ is a simple connected graph of order $n \geq 3$ which contains a cycle, then  $rvcl(G)\geq 3$.
\label{lemamemuatliangkaran}
\end{lemma}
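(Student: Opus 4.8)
The plan is to show that no locating rainbow coloring of $G$ can use fewer than three colors, so $rvcl(G)\geq 3$. Ruling out one color is immediate: with $\Pi=\{V(G)\}$ every vertex $v$ has $rc_{\Pi}(v)=(0)$, and since $n\geq 3$ two vertices share this code, so no locating rainbow $1$-coloring exists. The substance of the proof is to rule out a locating rainbow $2$-coloring $c:V(G)\to\{1,2\}$ with color classes $R_1,R_2$. If either class is empty we are back to the one-color case, so assume both are nonempty; since $G$ is connected there is at least one edge joining $R_1$ and $R_2$.

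The first key step is to show this joining edge is essentially unique. Every vertex $v\in R_1$ having a neighbor in $R_2$ satisfies $rc_{\Pi}(v)=(d(v,R_1),d(v,R_2))=(0,1)$; since all rainbow codes are distinct, there is at most one such vertex, hence (by connectedness) exactly one, call it $u$. Symmetrically there is exactly one vertex $w\in R_2$ adjacent to $R_1$. Therefore every $R_1$–$R_2$ edge has endpoints $u$ and $w$, so $G$ being simple forces $uw$ to be the unique edge between the two classes, hence a bridge; in particular $G[R_1]$ and $G[R_2]$ are the two components of $G-uw$, so each is connected.

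Next I would compute the second coordinate of the codes in $R_1$. Because every $R_1$–$R_2$ path traverses $uw$, for $v\in R_1$ we have $d(v,R_2)=d_{G[R_1]}(v,u)+1$, and $d_{G[R_1]}(v,u)=d_G(v,u)$. Since the codes $(0,d(v,R_2))$ for $v\in R_1$ are pairwise distinct, the distances $d_G(v,u)$ are pairwise distinct over $v\in R_1$, so with $m=|R_1|$ these distances are exactly $0,1,\dots,m-1$, one vertex per level. A short argument then shows that, inside the connected graph $G[R_1]$, a vertex at distance $k\geq 1$ from $u$ can only be adjacent to the (unique) vertex at distance $k-1$, which forces $G[R_1]$ to be a path having $u$ as an endpoint; the same argument with $R_1,R_2$ swapped shows $G[R_2]$ is a path with endpoint $w$.

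Finally, $G$ is then a path on $n$ vertices (equivalently, $G$ is connected with $(m-1)+(|R_2|-1)+1=n-1$ edges), hence acyclic, contradicting the hypothesis that $G$ contains a cycle. Thus no locating rainbow $2$-coloring exists and $rvcl(G)\geq 3$. I expect the main obstacle to be the level-set argument that pins down each color class as a path: one must argue carefully that having all vertices at distinct distances from $u$ not only puts one vertex per distance level but also rules out every "extra" chord, so that $G[R_1]$ is exactly a path and not merely a spanning subgraph of one.
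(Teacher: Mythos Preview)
The paper does not supply its own proof of this lemma; it is quoted from \cite{bustan2023locating} and used as a black box, so there is nothing in the present paper to compare your argument against.

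That said, your argument is correct and complete. The level-set step you flag as the main obstacle in fact goes through cleanly: writing $x_i$ for the unique vertex of $G[R_1]$ at distance $i$ from $u$ (for $i=0,1,\dots,m-1$), each $x_i$ with $i\geq 1$ has some neighbor at distance $i-1$, which must be $x_{i-1}$; conversely, any edge $x_ix_j$ forces $|i-j|\leq 1$ by the triangle inequality, and $i\neq j$ since the graph is simple, so every edge of $G[R_1]$ is of the form $x_{i-1}x_i$. Thus $G[R_1]$ is exactly the path $x_0x_1\cdots x_{m-1}$, not merely a spanning subgraph of one. It is worth noting that your proof never invokes the rainbow-vertex-path part of the definition, only the distinctness of the rainbow codes; this is harmless (you are deriving a contradiction from a weaker hypothesis) and in fact yields the slightly stronger statement that any connected graph admitting a $2$-coloring with pairwise distinct rainbow codes is a path.
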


\begin{teorema}\cite{bustan2023locating}
Let $G$ be a connected graph of order $n \geq 3$. Then $rvcl(G)=n$ if and only if $G$ is isomorphic to complete graphs.
\label{lemakn}
\end{teorema}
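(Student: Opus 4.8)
The plan is to prove the two implications separately. For the direction $G\cong K_n \Rightarrow rvcl(G)=n$, I would note that in $K_n$ all pairwise distances equal $1$, so for any two distinct vertices $u,v$ we have $d(u,x)=d(v,x)=1$ for every $x\in V(G)\setminus\{u,v\}$; by Lemma~\ref{lemma1} this forces $c(u)\neq c(v)$ in every locating rainbow coloring, hence at least $n$ colors are required. Conversely, assigning $n$ distinct colors is a locating rainbow coloring (the rainbow-path condition is vacuous because every two vertices are joined by an edge, and distinct colors give distinct rainbow codes). Thus $rvcl(K_n)=n$.

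For the converse I would prove the contrapositive: assuming $G$ is connected of order $n\ge 3$ with $G\not\cong K_n$, I will construct a locating rainbow $(n-1)$-coloring, which gives $rvcl(G)\le n-1<n$. The coloring identifies exactly one pair of vertices: choose a suitable pair $u,v$, put $c(u)=c(v)=1$, and give the remaining $n-2$ vertices the distinct colors $2,\dots,n-1$. For this to work I need (R): for every $x,y\in V(G)$ there is an $x$--$y$ path having at most one of $u,v$ as an internal vertex — such a path is automatically rainbow, since every vertex outside $\{u,v\}$ has its own color — and (L): the coloring is locating. Observe that (L) is cheap: every $w\notin\{u,v\}$ is the unique vertex whose rainbow code has a $0$ in coordinate $c(w)$, so the only possible code collision is $rc_\Pi(u)=rc_\Pi(v)$, and that happens if and only if $d(u,x)=d(v,x)$ for all $x\notin\{u,v\}$.

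The heart of the argument is the choice of $u$ and $v$. I would take $u$ to be a \emph{non-cut} vertex of $G$ (leaves of a spanning tree of $G$ are such vertices, and there are at least two of them). Then (R) is free: if $x,y\neq u$ there is an $x$--$y$ path avoiding $u$ entirely, and if $x=u$ or $y=u$ then $u$ is an endpoint of every $x$--$y$ path; in all cases at most $v$ is internal. It remains to pick $v\neq u$ so that $d(u,x)\neq d(v,x)$ for some $x\notin\{u,v\}$. If this were impossible, then $d(u,x)=d(v,x)$ for all $v\neq u$ and all $x\notin\{u,v\}$; taking $x=v'$ yields $d(v,v')=d(u,v')$ for all distinct $v,v'\in V(G)\setminus\{u\}$, and a short symmetry argument (the right-hand side depends only on $v'$, while by symmetry it depends only on $v$) forces all pairwise distances in $G$ to be a common constant $m$; since a shortest path of length $m\ge 2$ would contain an intermediate vertex at distance $1$, we get $m=1$, i.e.\ $G\cong K_n$, a contradiction. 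Hence the desired $v$ exists, the coloring above is a locating rainbow $(n-1)$-coloring, and the proof is complete.

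I expect the main obstacle to be condition (R): one must recognize that merging an arbitrary non-adjacent pair can destroy the rainbow-path property (for instance in $P_5$ if one identifies the two vertices that separate the endpoints), and that funnelling the construction through a non-cut vertex $u$ is exactly what makes (R) automatic. Once that is in place, the distance-distinguishing step is a clean symmetry computation and the verification of the rainbow codes is routine.
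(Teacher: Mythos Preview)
The paper does not prove this theorem; it is quoted from \cite{bustan2023locating} without proof, so there is nothing to compare your argument against in the present paper. That said, your proposal is a complete and correct proof. The forward direction via Lemma~\ref{lemma1} is the standard one. For the converse, your key structural idea---merge into a single color class a \emph{non-cut} vertex $u$ together with some $v$ that is distance-distinguished from $u$ by a third vertex---is exactly what is needed: the non-cut property gives (R) for free, and the short symmetry argument (if no such $v$ existed then all pairwise distances would coincide, forcing $G\cong K_n$) cleanly produces the required $v$. The verification of (L) via the uniquely colored witness $x$ is routine. No gaps.
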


\begin{teorema}\cite{bustan2021locating}
Let $diam(G)$ denote a diameter of $G$. If $G$ is a connected graph of order $n\geq 3 $ and $rvcl(G)=r$, then $n \leq r \times diam(G)^{r-1}$.
\label{theoremdiam}
\end{teorema}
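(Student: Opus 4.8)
The plan is to argue by a direct pigeonhole count of the possible rainbow codes. First I would fix a locating rainbow $r$-coloring $c$ of $G$ realizing $rvcl(G)=r$, with associated ordered partition $\Pi=\{R_1,R_2,\dots,R_r\}$. Since $r$ is the minimum number of colors, we may assume every color is actually used, so each $R_i\neq\emptyset$; consequently, for every vertex $v$ and every $i\in\{1,\dots,r\}$ the quantity $d(v,R_i)$ is a well-defined integer with $0\le d(v,R_i)\le diam(G)$, the upper bound holding because $G$ is connected of diameter $diam(G)$ and $R_i$ is nonempty.

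Next I would pin down the shape of a rainbow code. Because $\Pi$ is a partition of $V(G)$, each vertex $v$ lies in exactly one class, namely $R_{c(v)}$. Hence $d(v,R_{c(v)})=0$, while $d(v,R_j)\ge 1$ for every $j\neq c(v)$ since $v\notin R_j$. Therefore $rc_\Pi(v)$ is an $r$-tuple over $\{0,1,\dots,diam(G)\}$ having exactly one coordinate equal to $0$ (the coordinate indexed by $c(v)$) and all remaining $r-1$ coordinates in $\{1,2,\dots,diam(G)\}$.

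The number of $r$-tuples of this form is then easy to count: there are $r$ choices for the position of the unique zero coordinate, and for each of the other $r-1$ positions there are $diam(G)$ possible values, giving at most $r\cdot diam(G)^{\,r-1}$ admissible rainbow codes in total. Since $c$ is a locating rainbow coloring, the codes $rc_\Pi(v)$ are pairwise distinct as $v$ ranges over the $n$ vertices of $G$, so $v\mapsto rc_\Pi(v)$ is an injection from $V(G)$ into this set of admissible codes. Comparing cardinalities yields $n\le r\cdot diam(G)^{\,r-1}$, as claimed.

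The argument is short, and the only points needing care are the two structural observations feeding the count: that every color class may be taken nonempty (so that all coordinates are genuinely bounded by $diam(G)$), and that precisely one coordinate of each code vanishes (so that we are counting tuples with a single prescribed zero rather than arbitrary tuples of length $r$). If one does not insist that all colors are used, the same bound still holds since empty classes only restrict the admissible codes further; I would nevertheless include the nonemptiness reduction explicitly to keep the counting clean. I do not expect any genuine obstacle beyond making these two observations precise.
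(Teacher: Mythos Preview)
The paper does not actually prove this theorem; it is quoted from \cite{bustan2021locating} without proof, so there is nothing to compare against directly. Your pigeonhole argument is correct and is the natural way to establish the bound: the key structural observations---that every color class may be taken nonempty and that each rainbow code has exactly one zero coordinate with the remaining $r-1$ coordinates lying in $\{1,\dots,diam(G)\}$---are exactly what is needed, and the injectivity of $v\mapsto rc_\Pi(v)$ then forces $n\le r\cdot diam(G)^{r-1}$.
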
 

Lemma \ref{lemma1} concludes that if $c$ is a locating rainbow coloring of $G$, then two distinct vertices whose have the same distance to other vertices would not share the same color. Meanwhile, Theorem \ref{theoremdiam} demonstrates the assignment of a specific value for $rvcl(G)$ to determine the maximum number of vertices in a graph $G$ such that every vertex in $G$ has distinct rainbow code. Therefore, we aim to explore the $rvcl(G)$, specifically focusing on regular graphs where all vertices have the same degree. In this research, we focus on classes of vertex transitive graphs, also known as node symmetric graphs, where every pair of vertices is equivalent to some elements of its automorphism group \cite{chiang19952}. Let $n$ be the order of a graph $G$ and $t \geq 3$. An $(n, t)$-regular graph is a graph in which all its vertices have a degree of $n−t$. All graphs of $(n,n-2)$-regular graphs or $2$-regular graphs or cycles, $(n,1)$-regular graphs, $(n,2)$-regular graphs, and $(n,3)$-regular graphs are included in the vertex-transitive graphs. We have determined the locating rainbow connection number for $(n,1)$-regular graphs or complete graphs in Theorem \ref{lemakn}. Therefore, in this paper, we determine the locating rainbow connection number of $2$-regular graphs, $(n,2)$-regular graphs, and $(n,3)$-regular graphs.

\section{Main Results}
The main results are specifically focused on two subsections: the $2$-regular graphs or
cycles, which are extensively discussed in Subsection \ref{Subsec1}, and $(n, t)$-regular graphs in Subsection \ref{Subsec:2}. For simplicity, represent the set $\{n \in \mathbf{Z} \mid x \leq n \leq y\}$ as $[x, y]$.

\subsection{The Locating Rainbow Connection Number of $2$-Regular Graphs}
\label{Subsec1}

The locating rainbow connection number of 2-regular graphs or cycles are closely tied
to the rainbow vertex connection number of cycles. However, not all rainbow vertex
colorings impliy the locating rainbow colorings. Thus, we introduce a new coloring
to fulfill the requirements of locating rainbow colorings on cycles. Some cases require
the values of the rainbow vertex connection number of cycles as presented in Theorem \ref{cn1}.

\begin{teorema}\cite{li2014tight}
Let $C_n$ be a cycle of order $n\geq 3$. Then, 
\begin{center}
	$\begin{array}{ccl}
		rvc(C_n)&=&\left\{
		\begin{array}{ll}
			\lceil\frac{n}{2}\rceil -2, & \hbox{for $n \in \{3,5,9\}$;} \\
			\lceil\frac{n}{2}\rceil -1, & \hbox{for $n \in \{4,6,7,8,10,12,13,15\}$;}\\
			\lceil\frac{n}{2}\rceil, & \hbox{for $n = 14$ or $n \geq 16$.}
		\end{array}
		\right.
	\end{array}$
\end{center}{\tiny }
\label{cn1}

\end{teorema}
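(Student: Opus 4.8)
The plan is to prove the formula by matching upper and lower bounds, isolating the ``generic'' orders $n=14$ and $n\ge16$ from the finitely many small orders, which are then handled by explicit colourings.

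\emph{Upper bound.} For every $n$ I would first establish $rvc(C_n)\le\lceil n/2\rceil$ with a single construction: writing $C_n=v_0v_1\cdots v_{n-1}v_0$ and $M=\lceil n/2\rceil$, set $c(v_i)=1+(i\bmod M)$. Any two vertices are joined by two arcs, the shorter of which has at most $\lfloor n/2\rfloor$ edges, hence at most $\lfloor n/2\rfloor-1<M$ internal vertices; those internal vertices carry consecutive residues modulo $M$, so their colours are pairwise distinct and that arc is a rainbow path. For the orders $n\in\{3,5,9\}$ and $n\in\{4,6,7,8,10,12,13,15\}$ one must beat this bound by one, respectively two, colours; since only finitely many $n$ are involved, each is a concrete finite task: exhibit a colour pattern and, for every pair of vertices, point to a rainbow arc.

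\emph{Lower bound.} The key step is the following remark about any rainbow vertex $k$-colouring $c$ of $C_n$: if $c(v_a)=c(v_b)$ and the clockwise distance from $v_a$ to $v_b$ is $\ell$, then $n-3-k\le\ell\le k+3$. Indeed, for the pair $(v_{a-1},v_{b+1})$ the internal arc $v_a,\dots,v_b$ repeats a colour, so the complementary internal arc, of size $n-\ell-3$, must be rainbow and hence have at most $k$ vertices, giving $\ell\ge n-3-k$; the pair $(v_{b-1},v_{a+1})$ gives $\ell\le k+3$ symmetrically. Consequently, if $n-3-k\ge1$, a colour used $t$ times has its $t$ successive cyclic gaps summing to $n$ with each gap at least $n-3-k$, so $t\le\lfloor n/(n-3-k)\rfloor$; summing over the $k$ colour classes yields $n\le k\lfloor n/(n-3-k)\rfloor$. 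Substituting $k=\lceil n/2\rceil-1$ makes this inequality fail precisely when $n=14$ or $n\ge16$ (for such $n$ every colour is used at most twice, forcing $k\ge\lceil n/2\rceil$), and reading the same inequality for the appropriate smaller values of $k$ delivers the bounds $rvc(C_n)\ge\lceil n/2\rceil-1$ and $rvc(C_n)\ge\lceil n/2\rceil-2$ on the exceptional ranges.

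The main obstacle is the boundary behaviour. I expect the bulk of the work to be: (i) making the displayed remark airtight in the degenerate configurations — very short gaps, or $v_{a-1}=v_{b+1}$ — which need the symmetric pair or a direct argument; (ii) tracking parity so the threshold comes out exactly as ``$n=14$ or $n\ge16$'' rather than off by one; and (iii) producing and verifying the optimal colourings for each sporadic order $n\le15$, a finite but fiddly piece of bookkeeping that may be cleanest to organise with computer assistance.
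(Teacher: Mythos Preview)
The paper does not prove this theorem at all: it is quoted from Li and Liu~\cite{li2014tight} and used purely as a black box to feed lower bounds into Cases~5 and~6 of Theorem~\ref{thm1}. There is therefore no in-paper argument to compare your proposal against.

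On its own merits your plan is sound. The gap lemma --- that any two equally-coloured vertices lie at cyclic distance in $[n-3-k,\,k+3]$ --- is correct (the upper bound is just the lower bound applied to the complementary gap $n-\ell$), and the pigeonhole count $n\le k\lfloor n/(n-3-k)\rfloor$ does fail with $k=\lceil n/2\rceil-1$ precisely when $n=14$ or $n\ge16$: for such $n$ one has $n-3-k>\tfrac{n}{3}$, so the floor is~$2$ and $2k<n$. I also checked that rerunning the count with $k=\lceil n/2\rceil-2$ rules out that value for each $n\in\{6,7,8,9,10,12,13,15\}$, so the sporadic lower bounds fall out as you claim; only $n\le5$ need the trivial observation $rvc\ge \mathrm{diam}-1$. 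The residue colouring for the generic upper bound is standard and works. What remains --- and you flag this honestly --- is (i) a sentence for each of the degenerate gaps $\ell\in\{1,2,n-2,n-1\}$ where one of the auxiliary pairs collapses, and (ii) actually exhibiting and verifying optimal colourings for the finitely many $n\le15$, which is bookkeeping rather than an idea. Note incidentally that the statement as printed omits $n=11$ from the second line; your argument and the paper's own use of the result in Case~5 of Theorem~\ref{thm1} both place it there.
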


In Theorem \ref{thm1}, we see that for relatively small orders, $rvcl(C_n)$ differs from $rvc(C_n)$. For larger orders, $rvcl(C_n)=rvc(C_n)$.

\begin{teorema}
Let $C_n$ be a cycle of order $n\geq 3$. Then,
\begin{center}
	$\begin{array}{ccl}
		rvcl(C_n)&=&\left\{
		\begin{array}{ll}
			3, & \hbox{for $n \in [3,7]$;} \\
			\lceil\frac{n}{2}\rceil -1, & \hbox{for $n \in \{9,15\}$ or $n \in [11,13]$;}\\
			\lceil\frac{n}{2}\rceil, & \hbox{for $n \in \{8, 10, 14\}$ or $n \geq 16$.}
		\end{array}
		\right.
	\end{array}$
\end{center}{\tiny }
\label{thm1}
\end{teorema}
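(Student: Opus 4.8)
The plan is to establish the matching lower and upper bounds separately, in each case splitting on the size of $n$.

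\textbf{Lower bound.} Three inputs handle most cases. Since $C_n$ contains a cycle, Lemma~\ref{lemamemuatliangkaran} gives $rvcl(C_n)\ge 3$ for all $n\ge 3$, which (with the constructions below) settles $n\in[3,7]$. Inequality~\eqref{eq1} together with Theorem~\ref{cn1} gives $rvcl(C_n)\ge rvc(C_n)$, which yields the claimed value for $n\in\{11,12,13,15\}$ and for $n=14$ and all $n\ge 16$. Only $n\in\{8,9,10\}$ remain, where the asserted value strictly exceeds $rvc(C_n)$. For these I would argue directly, using that in a cycle the two arcs joining $u$ and $v$ are its only $u,v$-paths: if $d(u,v)$ is close enough to $diam(C_n)$ that the longer arc has more internal vertices than the number $k$ of colours used, then the shorter arc must be a rainbow path, so its internal vertices carry all $k$ colours. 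Running this over all such pairs forces (partial) periodicity of the colouring. For $n=9$, $k=3$, it forces \emph{every} three consecutive vertices to use all three colours, hence $c(v_i)=c(v_{i+3})$ for all $i$; as $\gcd(3,9)=3$, rotation by $3$ is then a partition-preserving automorphism cyclically permuting the three vertices of one colour class, so those vertices have equal rainbow codes --- contradicting the locating property. For $n=8$, $k=3$ and $n=10$, $k=4$ the forced structure is weaker: one shows the set of rainbow windows (arcs of $k$ consecutive vertices whose vertices get distinct colours) must contain a transversal of the antipodal arc-pairs, and that such a transversal necessarily contains two consecutive windows --- for $n=8$ because no transversal is an independent set of $C_8$, for $n=10$ because the two maximum independent sets of $C_{10}$ would force $P\sqcup P=\{1,2,3,4\}$ for a $2$-set $P$, which is absurd. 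Two consecutive rainbow windows give an identity $c(v_j)=c(v_{j+k-1})$, and a short finite check over the $\le k$ still-free positions then shows the codes cannot all be distinct. (Theorem~\ref{theoremdiam} is far too weak to exclude $k=\lceil n/2\rceil-1$ here, so a hands-on argument seems unavoidable.)

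\textbf{Upper bound.} Here I would give explicit colourings. For $n\in[3,7]$ a fixed $3$-colouring in each case (e.g.\ $1,2,3,1$ for $C_4$) works: every pair has an arc with at most two internal vertices, so the rainbow-path condition is immediate, and the at most seven rainbow codes are verified pairwise distinct by hand. For the remaining $n$, the target number of colours is $k\in\{\lceil n/2\rceil-1,\lceil n/2\rceil\}$, and a natural first candidate is the ``rotation'' colouring $c(v_i)=(i\bmod k)+1$, whose arcs of fewer than $diam(C_n)$ consecutive vertices are automatically rainbow; but, as noted before Theorem~\ref{cn1}, this colouring admits a nontrivial partition-preserving rotation (and, for even $n$, a reflection permuting the colour classes), which forces two equally coloured vertices to share a code. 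The construction therefore perturbs this colouring --- using one colour three times and another exactly once --- so that no nontrivial partition-preserving automorphism remains while ``every half-arc is rainbow'' is kept. One then checks (i) the rainbow-path property, only for arcs meeting a recoloured vertex, and (ii) distinctness of all $n$ codes, first among vertices of a common colour (which must differ in some nonzero coordinate) and then among vertices of different colours (already separated by the position of the zero coordinate). The boundary sizes $n\in\{8,10,14\}$ and the even/odd split for $n\ge 16$ follow the same template with small adjustments.

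\textbf{Main obstacle.} The delicate part is the sporadic lower bounds $rvcl(C_8)\ge 4$ and $rvcl(C_{10})\ge 5$: unlike $n=9$, the rainbow-path constraint does not reduce the colouring to one highly symmetric family, so the argument must descend into a finite but fiddly case analysis of colour-class configurations, and keeping track of the induced rainbow codes in each configuration is error-prone. A secondary difficulty is making the large-$n$ construction simultaneously a rainbow vertex colouring and a locating one: killing the obvious rotational symmetry of an optimal rainbow vertex colouring does not suffice, since the rainbow code is a coarser invariant than the automorphism orbit, so the perturbation must be chosen with care --- and one must ensure it never produces two consecutive non-rainbow arcs, which would break the rainbow-path property for some antipodal pair.
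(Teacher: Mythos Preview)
Your plan matches the paper's proof closely: the same three sources for the lower bounds (Lemma~\ref{lemamemuatliangkaran} for $n\in[3,7]$, inequality~\eqref{eq1} with Theorem~\ref{cn1} for $n\in\{11,12,13,14,15\}$ and $n\ge 16$, and ad hoc arguments for $n\in\{8,9,10\}$), and explicit perturbed-rotation colourings for the upper bounds (the paper gives figures for $n\le 15$ and a formula for $n=14$ and $n\ge 16$, using one colour exactly once for odd $n$ and a two-vertex swap at the end for even $n$, rather than a colour used three times).

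The only substantive difference is your organisation of the $n=8$ and $n=10$ lower bounds via ``rainbow windows'' and transversals of antipodal arc-pairs. The paper instead fixes one antipodal rainbow arc by symmetry and then, for $n=8$, branches on which of the two arcs is the rainbow $v_3$--$v_7$ path, and for $n=10$, branches on which of the four colours appears at least three times. Your framing reaches the same endpoint (a short finite check on the remaining positions) but isolates the combinatorial obstruction more cleanly; the paper's version is more pedestrian but fully written out. One small slip: two consecutive rainbow $k$-windows give $c(v_j)=c(v_{j+k})$, not $c(v_{j+k-1})$.
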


\begin{proof}
Let $n \geq 3$ and $C_n =v_1, v_2, ...,v_n, v_{n+1}$ with $v_{n+1}=v_1$. We consider six cases.

\textbf{Case 1} $n\in [3,7]$

Based on Lemma \ref{lemamemuatliangkaran} and Figure \ref{fig3}, we obtain $rvcl(C_n)=3$ for $n\in [3,7]$.

\begin{figure}[h]
	\centering
	\caption{Locating rainbow $3$-colorings of (a) $C_3$, (b) $C_4$, (c) $C_5$, (d) $C_6$, and (e) $C_7$.}
	\includegraphics[width=4.8in]{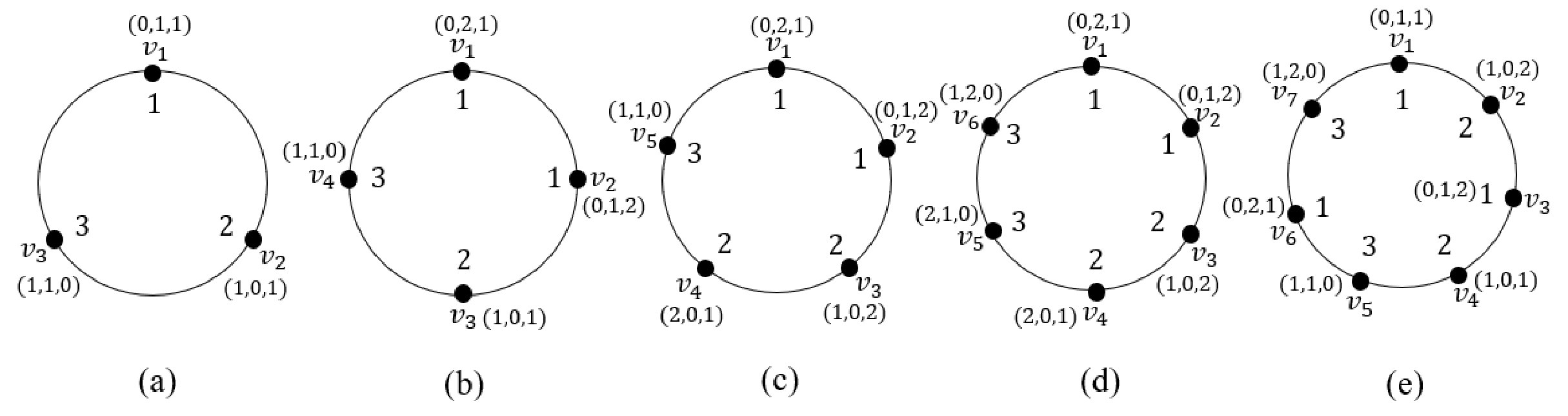}
	\label{fig3}
\end{figure}

\textbf{Case 2} $n=8$
Suppose that $rvcl(C_8 )\geq 3$. Let $c$ be a locating rainbow $3$-coloring of $C_8$. We begin by considering the vertices $v_1$ and $v_5$. Since the vertex coloring of $C_8$ is the rainbow vertex coloring, there exist two possible rainbow vertex $v_1-v_5$ paths, namely $P_1=v_1,v_2,v_3,v_4,v_5$ and $P_2=v_1,v_8,v_7,v_6,v_5$. Without loss of generality, assume that the rainbow vertex path is $P_1$ and $c(v_i)=i-1$ for $i\in[2,4]$. Next, we focus on vertices $v_3$ and $v_7$. There are two possible rainbow vertex paths, i.e., $v_3-v_7$ paths, specifically $P_3=v_3,v_2,v_1,v_8,v_7$, and $P_4=v_3,v_4,v_5,v_6,v_7$. Let the rainbow vertex $v_3-v_7$ path is $P_3$. As $c(v_2 )=1$, it follows that $c(v_1 )=3$ and $c(v_8 )=2$. 
Next, we consider vertices $v_2$ and $v_6$. To ensure the existence of a rainbow vertex path between $v_2$ and $v_6$, we must have $c(v_5 )=1$ or $c(v_7)=1$. If $c(v_5)=1$, then $rc_{\Pi} (v_1 )=rc_{\Pi} (v_4 )$. If $c(v_7)=1$, then $rc_{\Pi} (v_3)=rc_{\Pi} (v_8 )$. For both cases, we get a contradiction. Similarly, in case $v_3-v_7$ path is $P_4$, we observe a similar contradiction. Therefore, we can conclude that $rvcl(C_8 )\geq 4$. By defining a locating rainbow $4$-coloring of $C_8$ as demonstrated in Figure \ref{fig4}(a), we establish that $rvcl(C_8 )=4$.

\textbf{Case 3} $n=9$
Suppose that $rvcl(C_9)=3$. Let $c$ be a locating rainbow $3$-coloring of $C_9$. For $i\in[1,9]$, in order to have a rainbow vertex path between $v_i$ and $v_{((i+4)\pmod{9})+1}$, every three consecutive vertices must be assigned distinct colors. However, this leads to a contradiction as there will be at least two vertices with the same colors and the same rainbow codes. Consequently, we can conclude that $rvcl(C_9)\geq 4$. We show that $rvcl(C_9)= 4$ by defining a locating rainbow $4$-coloring of $C_9$, as shown in Figure \ref{fig4}(b).

\textbf{Case 4} $n=10$
Suppose that $rvcl(C_{10} )=4$. Let $c$ be a locating rainbow $4$-coloring of $C_{10}$. We begin by considering the vertices $v_1$ and $v_6$. Since the vertex coloring of $C_{10}$ is the rainbow vertex coloring, there exist two possible rainbow vertex $v_1-v_6$ paths, namely $P_1=v_1,v_2,v_3,v_4,v_5,v_6$ and $P_2=v_1,v_{10},v_9,v_8,v_7,v_6$. Without loss of generality, assume that the rainbow vertex path is $P_1$ and $c(v_i)=i-1$. Since we have four colors, there must be at least one color used by at least three distinct vertices. Next, we categorize this case into four subcases based on the colors used on at least three distinct vertices in $C_{10}$.
\begin{enumerate}
	\item Color $1$.\\
	Since $c(v_2) = 1$, it follows that $c(v_6) = c(v_9) = 1$. Consequently, $c(v_7) = 2$ and	$c(v_1) = 4$. Therefore, $rc_{\Pi}(v_2) = rc_{\Pi}(v_6) = (0, 1, 2, 1)$, leading to a contradiction.
	\item Color $2$.\\
	To ensure a rainbow vertex path between any two vertices in $C_{10}$, set $c(v_i) =
	c(v_j )$ if $3 \leq d(v_i
	, v_j ) \leq 4$. Consequently, vertices colored with $i \in \{2, 3\}$ are
	$v_{i+1}, v_j, v_k$ for some $j, k \in [1, 10]$, where $i \neq j \neq k$ and $d(v_{i+1}, v_j)$, $d(v_{i+1}, v_k)$,
	and $d(v_j, v_k)$ are in $\{3, 4\}$.
	There are three possible combinations of the remaining two vertices that can
	be colored with $2$. First, if $c(v_3) = c(v_7) = c(v_{10}) = 2$, then vertex $v_6$ and	vertex $v_8$ must be colored with either $1$ or $3$. Consequently, $rc_{\Pi}(v_3) = rc_{\Pi}(v_7)$.
	Second, if $c(v_3) = c(v_6) = c(v_{10}) = 2$, vertices $v_7$ and $v_8$ have to be colored with either $1$ or $3$. In this case, $c(v_9) = 4$ and $c(v_1) = 3$. Therefore, if $c(v_8) = 1$,
	then $rc_{\Pi}(v_6) = rc_{\Pi}(v_{10})$ and if $c(v_8) = 3$, then $rc_{\Pi}(v_5) = rc_{\Pi}(v_9)$. Third,
	$c(v_3) = c(v_6) = c(v_9) = 2$, which implies $c(v_1) = 4$ and $rc_{\Pi}(vv) = rc_{\Pi}(v_4)$, contradiction.
	\item Color $3$.\\
	By employing a similar argument with color $2$, we arrive at a contradiction.
	\item Color $4$.\\
	By employing a similar argument with color $1$, we arrive at a contradiction.
\end{enumerate}

Therefore, $rvcl(C_{10})\geq 5$. To prove that $rvcl(C_{10})\leq 5$, we define a locating rainbow $5$-coloring of $C_{10}$ as shown in Figure \ref{fig4}(c). Thus, $rvcl(C_{10})= 5.$

\begin{figure}[h]
	\centering
	\caption{Rainbow codes of (a) $C_8$, (b) $C_9$, and (c) $C_{10}$.}
	\includegraphics[width=4.6in]{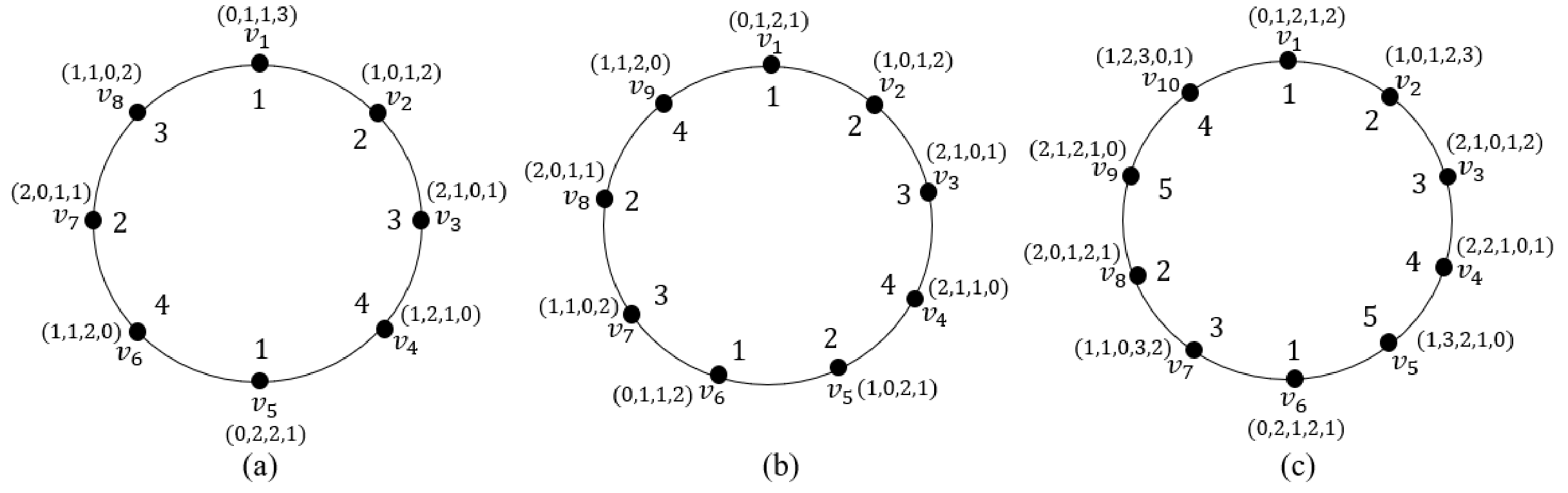}
	
	\label{fig4}
\end{figure}

\textbf{Case 5} $n \in \{11, 12, 13, 15\} $

According to Equation \ref{eq1} and Theorem \ref{cn1}, we have $rvcl(C_n)\geq rvc(C_n)= \lceil\frac{n}{2}\rceil -1$. Next, we show $rvcl(C_n) \leq \lceil\frac{n}{2}\rceil -1$ by defining a locating rainbow $\lceil\frac{n}{2}\rceil -1$-coloring of $C_n$ as shown in Figures \ref{fig5}. Thus, we have $rvcl(C_n)=\lceil\frac{n}{2}\rceil -1$ for $n \in \{11, 12, 13, 15\}$.
\begin{figure}[h]
	\centering
	\caption{Rainbow codes of (a) $C_{11}$, (b) $C_{12}$, (c) $C_{13}$, and $C_{15}$.}
	\includegraphics[width=4.2in]{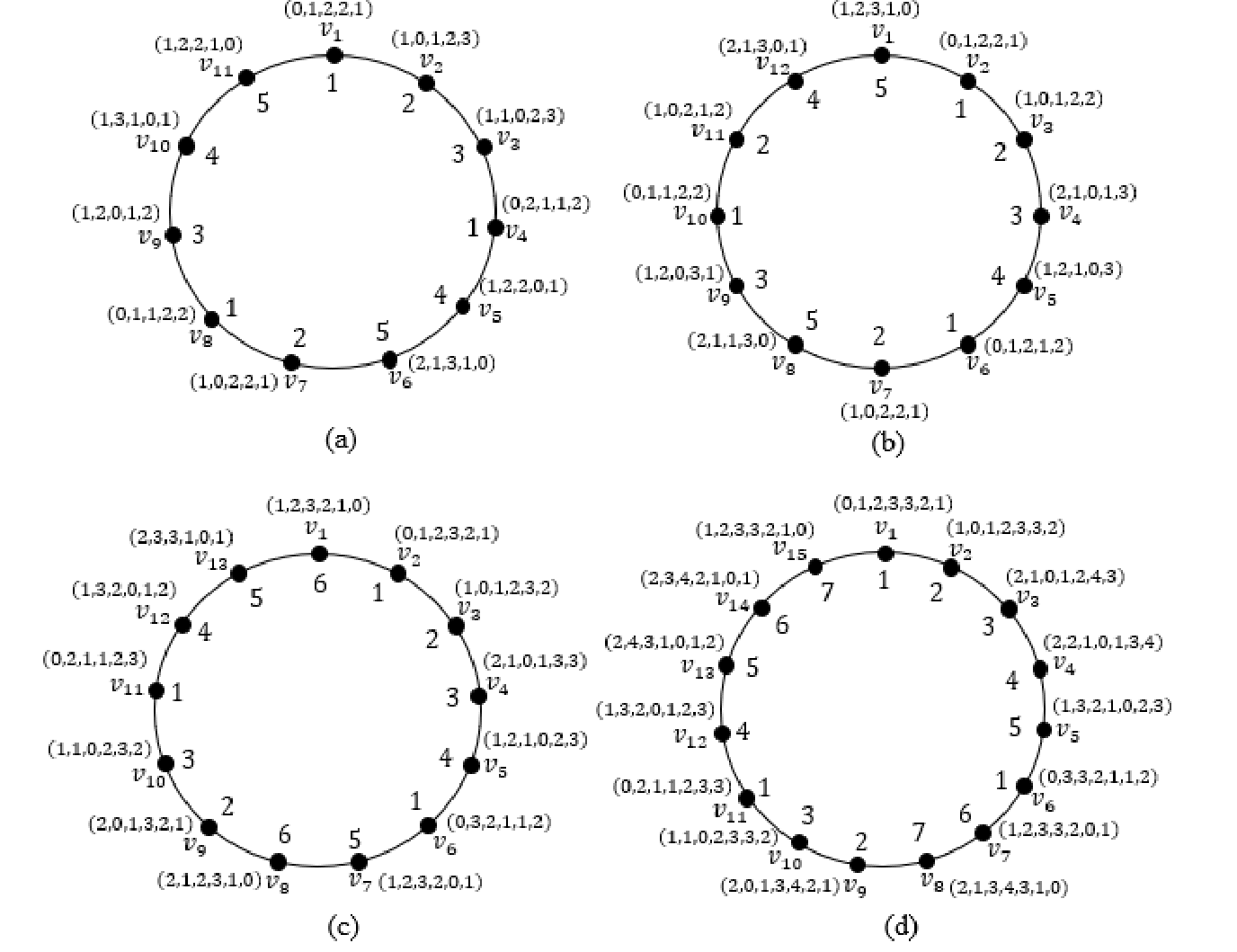}
	\label{fig5}
\end{figure}

\textbf{Case 6} $n=14$ or $n \geq 16$ 

According to Equation \ref{eq1} and Theorem \ref{cn1}, we have $rvcl(C_n)\geq rvc(C_n)= \lceil\frac{n}{2}\rceil$ for $n \geq 16$ or $n =14$. Next, we demonstrate the upper bound by defining  a rainbow vertex coloring $c:V(C_n)\longrightarrow [1,n]$ as follows.
\begin{enumerate}
	\item For odd $n$, we define
	\begin{center}
		$\begin{array}{ccl}
			c(v_i)&=&\left\{
			\begin{array}{ll}
				i~mod ~\lceil\frac{n}{2}\rceil, & \hbox{for $i\in[1,n], i\neq \lceil\frac{n}{2}\rceil$;} \\
				\lceil\frac{n}{2}\rceil, & \hbox{for others.}
			\end{array}
			\right.
		\end{array}$
	\end{center}{\tiny }
	By using vertex coloring above, color $\lceil\frac{n}{2}\rceil$ is only used for $v_{\lceil\frac{n}{2}\rceil}$ and $c(v_i)=c(v_{i+\lceil\frac{n}{2}\rceil})$ for $i \in [1, \lceil\frac{n}{2}\rceil-1]$. Consequently, for any two vertices of $C_n$, there exists a rainbow vertex path. Since $n\neq 0$, $d(v_i,v_{\lceil\frac{n}{2}\rceil}) \neq d(v_{i+\lceil\frac{n}{2}\rceil}, v_{\lceil\frac{n}{2}\rceil})$. Thus, $rc_{\Pi}(v_i)\neq rc_{\Pi}(v_j)$ for distinct $i,j \in [1,n]$.
	
	\item For even $n$, we define
	\begin{center}
		$\begin{array}{ccl}
			c(v_i)&=&\left\{
			\begin{array}{ll}
				\frac{n}{2}-1, & \hbox{for $i=n$;} \\
				\frac{n}{2}, & \hbox{for $i=n-1$;}\\
				i~mod~\frac{n}{2}, & \hbox{for others.}
			\end{array}
			\right.
		\end{array}$
	\end{center}{\tiny }

	Using the vertex coloring above, we have $c(v_i)=c(v_{i+\frac{n}{2}})$ for $n \in [1, \frac{n}{2}-2] \cup [\frac{n}{2}+1, n-2]$, $c(v_{\frac{n}{2}-1})=c(v_n)$, and $c(v_{\frac{n}{2}})=c(v_{n-1})$. Thus, for any two vertices of $C_n$, there exists a rainbow vertex path. Furthermore, we find that $d(v_i,R_{\frac{n}{2}})\neq d(v_{i+\frac{n}{2}}, R_{\frac{n}{2}})$ for $n \in [1, \frac{n}{2}-2]\cup [\frac{n}{2}+1, n-2]$. Additionally, $d(v_{\frac{n}{2}-1}, R_1)\neq d(v_n, R_1)$, and $d(v_{\frac{n}{2}})\neq d(v_{n-1})$. Hence, $rc_{\Pi}(v_i)\neq rc_{\Pi}(v_j)$ for distinct $i,j \in [1,n]$.
\end{enumerate}
Since 	$rvcl(C_n)\geq \lceil\frac{n}{2}\rceil$ and 	$rvcl(C_n)\leq \lceil\frac{n}{2}\rceil$, we conclude that $rvcl(C_n)= \lceil\frac{n}{2}\rceil$
\end{proof}

\subsection{The Locating Rainbow Connection Number of $(n,t)$-Regular Graphs for $t=\{2,3\}$}

In this subsection, we present two main theorems concerning the locating rainbow connection number of $(n,2)$-regular graphs and $(n,2)$-regular graphs. However, before delving into the theorems, we provide Lemma \ref{co2} to aid in the proof process of both theorems. For simplification, we use the term “entry” to refer to the distance from a vertex to a set of colors and the combination formula $C^{r}_{k}$ is expressed as $\frac{r!}{k!(r-k)!}$.

\begin{lemma}
Let $n$ and $t$ be three positive integers with $n \geq 5$, $t=\{2,3\}$, and $r\geq 2$. Let $R_{(n,t)}$ be a regular graph of order $n$ with all vertices have a degree of $n-t$, and let $r$ be the locating rainbow connection number of $R_{(n,t)}$ with $r\geq 3$. Then:
\begin{enumerate}
	\item each rainbow code contains at most $t-1$ of entries $2$;
	\item every color can be used for at most  $1+ \sum_{k=1}^{t-1} C^{r-1}_{t-k}$ vertices;
	\item for every color $w$, the maximum number of vertices $v$, such that $d(v,R_w)=2$ is $t-1$.
\end{enumerate}
\label{co2}
\end{lemma}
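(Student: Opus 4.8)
The plan is to analyze the structure of a vertex in an $(n,t)$-regular graph and exploit the fact that such a vertex is non-adjacent to exactly $t-1$ other vertices (since its degree is $n-t$, and it is non-adjacent to itself vacuously — more precisely, each vertex $v$ has exactly $n-1-(n-t)=t-1$ non-neighbors). This single observation drives all three parts. First I would establish part (1): for a vertex $v$ and a color class $R_w$, if $d(v,R_w)\geq 2$ then $v$ has no neighbor colored $w$, so every vertex colored $w$ lies among the $t-1$ non-neighbors of $v$; since the graph has diameter $2$ (any two non-adjacent vertices in an $(n,t)$-regular graph with $n\geq t+2$ share a common neighbor, as their neighborhoods are too large to be disjoint — this needs a short counting argument $ |N(u)|+|N(v)| = 2(n-t) > n-2$ when $n > 2t-2$, which holds for $n\geq 5$, $t\in\{2,3\}$), every entry of every rainbow code is $0$, $1$, or $2$. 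An entry equal to $2$ for color $w$ forces all $w$-colored vertices into the non-neighbor set of $v$; I would then argue that at most $t-1$ colors can simultaneously have this property, because the non-neighbor set has only $t-1$ elements and distinct "distance-$2$" colors require disjoint witnesses there — giving at most $t-1$ entries equal to $2$. (I should double-check whether the bound is $t-1$ or $t$; the non-neighbor set having $t-1$ vertices suggests $t-1$, matching the statement.)

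Next, for part (3), fix a color $w$ and count the vertices $v$ with $d(v,R_w)=2$. Such a $v$ is non-adjacent to every vertex in $R_w$; equivalently, $R_w$ is contained in the non-neighbor set of $v$. I would flip the perspective: a vertex $u\in R_w$ is non-adjacent to $v$, so $v$ lies in the non-neighbor set of $u$, which has size $t-1$. If $|R_w|\geq 1$, pick any $u_0 \in R_w$; then every $v$ with $d(v,R_w)=2$ is one of the $t-1$ non-neighbors of $u_0$, so there are at most $t-1$ such vertices. This is the cleanest route and essentially immediate once part (1)'s structural picture is in place.

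Finally, for part (2), I would bound $|R_w|$ by splitting $R_w$ into the vertices $v\in R_w$ with $d(v,R_w\setminus\{v\})=1$ versus those with $d(v,R_w\setminus\{v\})\geq 2$. Actually the crisper decomposition: a vertex $v$ colored $w$ is distinguished from other $w$-colored vertices only through entries of other colors, and by part (1) its rainbow code has at most $t-1$ entries equal to $2$ among the $r-1$ coordinates other than $w$ (the $w$-coordinate is $0$). So the number of distinct rainbow codes available to $w$-colored vertices is at most the number of ways to choose which of the remaining $r-1$ coordinates equal $2$ — namely choose a subset of size between $1$ and $t-1$, contributing $\sum_{k=1}^{t-1}C^{r-1}_{t-k}=\sum_{j=1}^{t-1}C^{r-1}_{j}$ — plus the single code with no entry equal to $2$ (all other coordinates equal $1$, since diameter is $2$ and $v$ is non-isolated from each color class it doesn't hit directly... here I must be careful: a coordinate could be $1$ or it must be checked it can't be forced, but the count of codes with a fixed zero-set and only $0/1/2$ entries, at most $t-1$ twos, is what matters). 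That yields the bound $1+\sum_{k=1}^{t-1}C^{r-1}_{t-k}$.

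The main obstacle I anticipate is part (2): making rigorous that the only freedom in the rainbow code of a $w$-colored vertex lies in the placement of the $2$-entries, i.e. ruling out spurious multiplicities or, conversely, ensuring I am not overcounting by treating $1$-entries as fixed when they need not be. The honest argument is that two $w$-colored vertices with the same set of coordinates equal to $2$ must then agree on all other coordinates (which are all $1$ except the $0$ in position $w$), hence have identical rainbow codes, contradicting the locating property — so at most one $w$-colored vertex per choice of $2$-entry set, and at most one with empty $2$-entry set, giving the stated bound. I would also need to confirm the edge case $r=2$ (where the sum may be empty or degenerate) is consistent with the hypothesis $r\geq 3$ actually in force for the substantive claims, and reconcile the lemma's mixed hypotheses ($r\geq 2$ in the preamble versus $r\geq 3$ in the body).
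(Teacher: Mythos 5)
Your proposal is correct. For parts (1) and (2) you follow essentially the same route as the paper: diameter $2$ (which the paper simply asserts and you justify via the neighborhood-counting inequality $2(n-t)>n-2$), the observation that each vertex has exactly $t-1$ non-neighbors so that at most $t-1$ pairwise disjoint color classes can lie entirely inside that set, and then a count of the admissible codes of $w$-colored vertices. Your treatment of (2) is in fact more careful than the paper's, which merely lists the possible numbers of $2$-entries ($0$, $t-2$, $t-1$) and states the bound; you make explicit the step that actually yields the bound on $|R_w|$, namely that the $w$-entry is $0$, all other entries are $1$ or $2$, so the code of a $w$-colored vertex is determined by its set of $2$-entries, and the locating property therefore allows at most one $w$-colored vertex per such set. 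For part (3) you take a genuinely different route: the paper supposes $t$ vertices at distance $2$ from $R_w$, introduces non-neighbor witnesses $v_i'\in R_w$, and derives a contradiction from the claimed adjacencies $v_iv_j'\in E(G)$ for $i\neq j$ --- a step that is not immediate as written (for $t=3$ a vertex has two non-neighbors, and indeed each $v_i$ is non-adjacent to all of $R_w$, so the asserted adjacency needs repair), whereas your direct argument --- every $v$ with $d(v,R_w)=2$ is a non-neighbor of any fixed $u_0\in R_w$, and $u_0$ has only $t-1$ non-neighbors --- is shorter, avoids that issue, and gives the bound at once. The loose ends you flag (nonemptiness of $R_w$, the $r\geq 2$ versus $r\geq 3$ hypothesis) are artefacts of the lemma's own statement and do not affect correctness.
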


\begin{proof}
\begin{enumerate}
	\item Since $diam(R_{(n,t)} )=2$, the graph only contains entries of $0,1$, and $2$. Each vertex in $R_{(n,t)}$ is non-adjacent to exactly $t-1$ other vertices, resulting in the rainbow code possibly containing at most $t-1$ entries of $2$.
	\item Based on the first point in this proof, there are three possible rainbow codes: those without entry $2$, those with $t-2$ entries of $2$, and those with $t-1$ entries of $2$. Hence, every color can be used for at most  $1+ \sum_{k=1}^{t-1} C^{r-1}_{t-k}$ times.
	\item Suppose there are $t$ distinct vertices $v_1, v_2, ..., v_t$ such that $d(v_i,R_w)=2$ for $i\in[1,t]$. Consequently, there exist $v_1', v_2', ..., v_t'$ such that $v_iv_i'\notin E(G)$ and $c(v_i')= 2$ for $i\in[1,t]$. Since $v_iv_j'\in E(G)$ for $i\in[1,t]$ and $i\neq j$, it follows that $d(v_i,R_w)=1$, which is a contradiction. Therefore, for every color $w$, the maximum number of vertices $v$ such that $d(v,R_w)=2$ is $t-1$.

\end{enumerate}
\end{proof}

\subsubsection{The Locating Rainbow Connection Number of $(n,2)$-Regular Graphs}
\label{Subsec:2}

Consider a graph $G$ with order $n\geq4$ and $n$ being even. If all vertices in $G$ are of degree $n-2$, it is termed an $(n,2)$-regular graphs and denoted by $R_{(n,2)}$. On the other hand, $K_n$ represents $(n,1)$-regular graphs. Earlier, in Theorem \ref{lemakn}, we derived the locating rainbow connection number for $(n,1)$-regular graph. Earlier, in Theorem \ref{thm1}, we obtained locating rainbow connection number for $(n,n-2)$-regular graphs, or $2$-regular graphs or cycle graphs. In this section, we focus on determining the locating rainbow connection number of the $R_{(n,2)}$.

\begin{figure}[h]
\centering
\caption{(a) $R_{(12,1)}$, (b) $R_{(12,2)}$}
\includegraphics[width=3.8in]{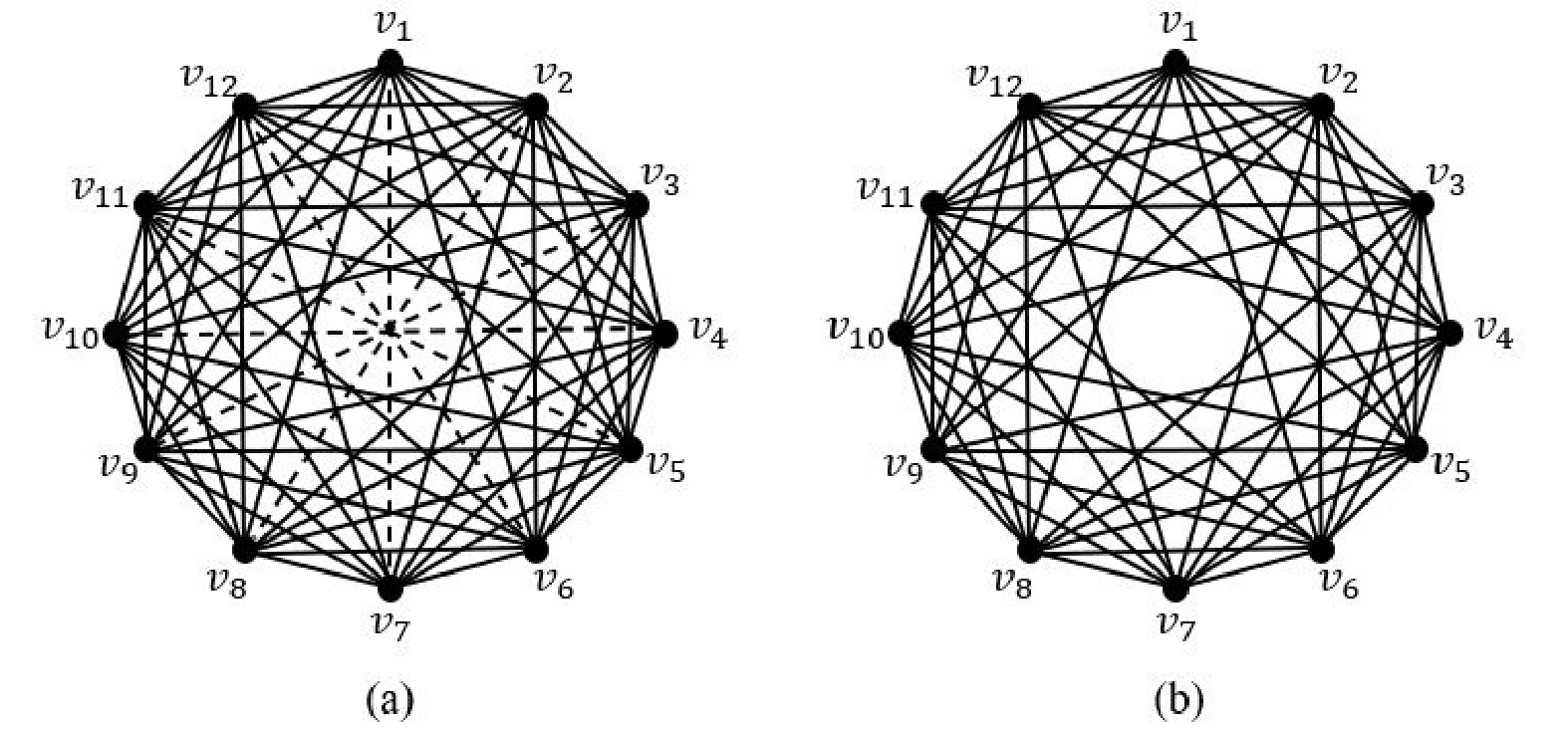}
\label{fig9}
\end{figure}

In actuality, The $(n,2)$-regular graph is obtained by removing $\frac{n}{2}$ disjoint edges from an $(n,1)$-regular graph (See Figure \ref{fig9}). The resulting $(n,2)$-regular graph has the following edges and vertices: $V(R_{(n,2)})=\{v_i | i\in [1,n]\}$, $E(R_{(n,2)})=\{v_iv_j|i,j\in[1,n], i\neq j, j\neq i+\frac{ n}{2}\}$.

\begin{teorema}
Let $n$ be a positive integers with $n \geq 4$, and $n$ be even. If $R_{(n,2)}$ is a regular graph of order $n$ with all vertices of degree $n-2$, then $rvcl(R_{(n,2)})=\frac{n}{2} +1$.
\label{mainteo1}
\end{teorema}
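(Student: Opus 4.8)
The plan is to exploit how rigid the metric of $R_{(n,2)}$ is. Since $diam(R_{(n,2)})=2$ and each vertex $v_i$ fails to be adjacent to exactly one vertex, its \emph{antipode} $v_i^{*}=v_{i+n/2}$, the rainbow vertex connectivity condition is automatic: any two vertices lie on a path of length at most $2$, whose at most one internal vertex is trivially rainbow. So the whole task is to make the rainbow codes pairwise distinct. The first step is to pin down the codes. For a minimum coloring we may assume every color is used; call a color \emph{solitary} if it is used on exactly one vertex. Then for a vertex $v$, the entry of $rc_{\Pi}(v)$ in position $c(v)$ is $0$; a position $w\ne c(v)$ holds a $2$ exactly when the unique vertex of color $w$ is $v^{*}$, i.e.\ when $w$ is solitary and $c(v^{*})=w$; and every remaining entry is $1$. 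In particular $rc_{\Pi}(v)$ has at most one entry equal to $2$ (as in Lemma~\ref{co2}) and is determined by the pair $\bigl(c(v),\ \text{position of the }2\text{ if any}\bigr)$.

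For the lower bound I would set $r=rvcl(R_{(n,2)})$, fix an optimal coloring, and read off from the above that distinct $u,v$ satisfy $rc_{\Pi}(u)=rc_{\Pi}(v)$ iff $c(u)=c(v)$ and neither $c(u^{*})$ nor $c(v^{*})$ is solitary: if the codes agree they share the unique $0$-position, so $c(u)=c(v)$, and if $c(u^{*})$ were solitary, matching the $2$ in that position would force $c(v^{*})=c(u^{*})$ solitary, hence $u^{*}=v^{*}$ and $u=v$. (Applied to an antipodal pair this also re-proves $c(v_i)\ne c(v_i^{*})$, consistent with Lemma~\ref{lemma1}.) Consequently, in any color class at most one vertex can have a non-solitarily-colored antipode, for otherwise two such vertices would share a code; so a class $R_w$ with $|R_w|\ge 2$ contains a subset $D_w$ with $|D_w|\ge|R_w|-1$ all of whose antipodes are solitarily colored. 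Let $s$ be the number of solitary colors and $p$ the number of colors used at least twice, so $s+p=r$. The vertices $v^{*}$ for $v\in\bigcup_w D_w$ (union over the $p$ non-solitary colors) are pairwise distinct and all solitarily colored, and there are at least $\sum_w(|R_w|-1)=(n-s)-p$ of them; since only $s$ vertices carry solitary colors, $s\ge (n-s)-p$, i.e.\ $2s+p\ge n$. With $s+p=r$ this gives $2r\ge 2s+2p\ge 2s+p\ge n$, hence $r\ge n/2$; and $r=n/2$ would force $p=0$ and $s=n/2$, hence $s=n$ (all colors solitary), contradicting $n\ge 4$. Therefore $rvcl(R_{(n,2)})\ge \frac n2+1$.

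For the matching upper bound I would exhibit a locating rainbow $\bigl(\frac n2+1\bigr)$-coloring. For $n=4$ we have $R_{(4,2)}\cong C_4$, and the value $3=\frac{4}{2}+1$ is Theorem~\ref{thm1}. For $n\ge 6$, write $m=\frac n2\ge 3$, $x_i=v_i$, $y_i=v_{i+m}$, and set $c(x_i)=i+2$ for $1\le i\le m-1$, $c(x_m)=1$, $c(y_1)=1$, and $c(y_i)=2$ for $2\le i\le m$. Then colors $3,\dots,m+1$ are solitary (on $x_1,\dots,x_{m-1}$), color $1$ is used on $\{x_m,y_1\}$ and color $2$ on $\{y_2,\dots,y_m\}$ (each of size at least $2$ since $m\ge 3$), and no antipodal pair $\{x_i,y_i\}$ is monochromatic. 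Plugging this into the code description: $x_i$ ($i\le m-1$) gets a $0$ in position $i+2$ and a $1$ in every other position; $x_m$ gets a $0$ only in position $1$; $y_m$ gets a $0$ only in position $2$; $y_1$ gets a $0$ in position $1$ and a $2$ in position $3$; and $y_i$ ($2\le i\le m-1$) gets a $0$ in position $2$ and a $2$ in position $i+2$. Reading off the $0$-position together with the $2$-position shows all $n$ codes are distinct, so $rvcl(R_{(n,2)})\le\frac n2+1$, completing the proof.

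I expect the lower bound to be the crux, specifically the sharp constant: a crude count of the form $n\le r\cdot(\text{largest class size})$ only yields $r\ge\sqrt n$, so the argument must go through the interplay between solitary and repeated colors via the antipode map, and then dispose of the borderline case $r=n/2$ by hand. The construction is then essentially bookkeeping once one notices that the extremal colorings have exactly $\frac n2-1$ solitary colors, each placed on the antipode of a repeated-color vertex, with the remaining $\frac n2+1$ vertices packed into just two color classes.
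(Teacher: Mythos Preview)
Your proof is correct. For the lower bound you and the paper pursue the same idea---counting how many distinct rainbow codes are available once one observes that a $2$ can appear in position $w$ only when $R_w$ is a singleton---but you phrase it structurally via the antipode map and the inequality $2s+p\ge n$, while the paper simply invokes Lemma~\ref{co2} to get at most $r$ codes without a $2$ and at most $r-1$ with one, hence at most $2r-1<n$ when $r=\frac n2$. Your upper-bound construction genuinely differs from the paper's: they place $\frac n2$ vertices in a single color class and make the remaining $\frac n2$ colors solitary, whereas you use two non-singleton classes of sizes $2$ and $\frac n2-1$ together with $\frac n2-1$ singletons (and handle $n=4$ via Theorem~\ref{thm1}); both colorings verify the upper bound. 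One wording issue: in the line ``$r=n/2$ would force $p=0$ and $s=n/2$, hence $s=n$'', the conclusion you actually want is that $p=0$ means every vertex receives its own color, forcing $r=n\neq n/2$; as written, ``hence $s=n$'' reads as a non sequitur immediately after deriving $s=n/2$.
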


\begin{proof}

Suppose that $rvcl(R_{(n,2)})=\frac{n}{2}$. Based on Lemma \ref{co2}, we have the number of rainbow codes do not contain entry $2$ at most $\frac{n}{2}$ and the number of codes that contain entry $2$ at most $\frac{n}{2}-1$. Thus, the maximum number of different rainbow codes is $n-1$, leading to a contradiction. Therefore, we have $rvcl(R_{(n,2)})\geq \frac{n}{2}+1$.

Furthermore, we demonstrate that $rvcl(R_{(n,2)})\leq \frac{n}{2}+1$ by defining vertex coloring $c: V(R_{(n,2)}) \longrightarrow [1,\frac{n}{2} +1]$ as follows. 
\begin{center}
	$\begin{array}{ccl}
		c(u_i)&=&\left\{
		\begin{array}{ll}
			i, & \hbox{for $i\in[1,\frac{n}{2}+1]$;} \\
			1 , & \hbox{otherwise.}
		\end{array}
		\right.
	\end{array}$
\end{center}

Since $diam(R_{(n,2)})=2$, using $\frac{n}{2}+1$ colors will ensure that for any two distinct vertices $v_i$ and $v_j$ for $i\in[1,n]$, there exists a $v_i-v_j$ rainbow vertex path. Furthermore, we show that all vertices in $R_{(n,2)}$ have distinct rainbow codes by considering the following.
\begin{enumerate}
	\item $c(v_i)\neq c(v_j)$, for distinct $i,j\in[1,\frac{n}{2}+1]$.
	\item $c(v_i)=c(v_j)=1$ for distinct $i,j\in [\frac{n}{2}+2,n] \cup \{v_1\}$, but $d(v_1,R_{\frac{n}{2}+1})=2$ and $d(v_i,R_{\frac{n}{2}+1})=1$ for $i\in [\frac{n}{2}+2,n]$. Besides that, $d(v_i,R_{i-\frac{n}{2}})=2$ and $d(v_i,R_a)=1$ for $a\in[1,\frac{n}{2}]$, $a\neq i-\frac{n}{2}$. 
\end{enumerate}
Thus, $rc_{\Pi}(v_i)\neq rc_{\Pi}(v_j)$ for distinct $i,j \in[1,n]$. Therefore, we have $rvcl(R_{(n,2)})\leq \frac{n}{2}+1$.
\end{proof}

For illustration, in Figure \ref{fig8} we provide  a locating rainbow coloring of the regular graph $R_{(16,2)}$.

\begin{figure}[h]
\centering
\caption{Rainbow codes of $R_{(16,2)}$.}
\includegraphics[width=2.9in]{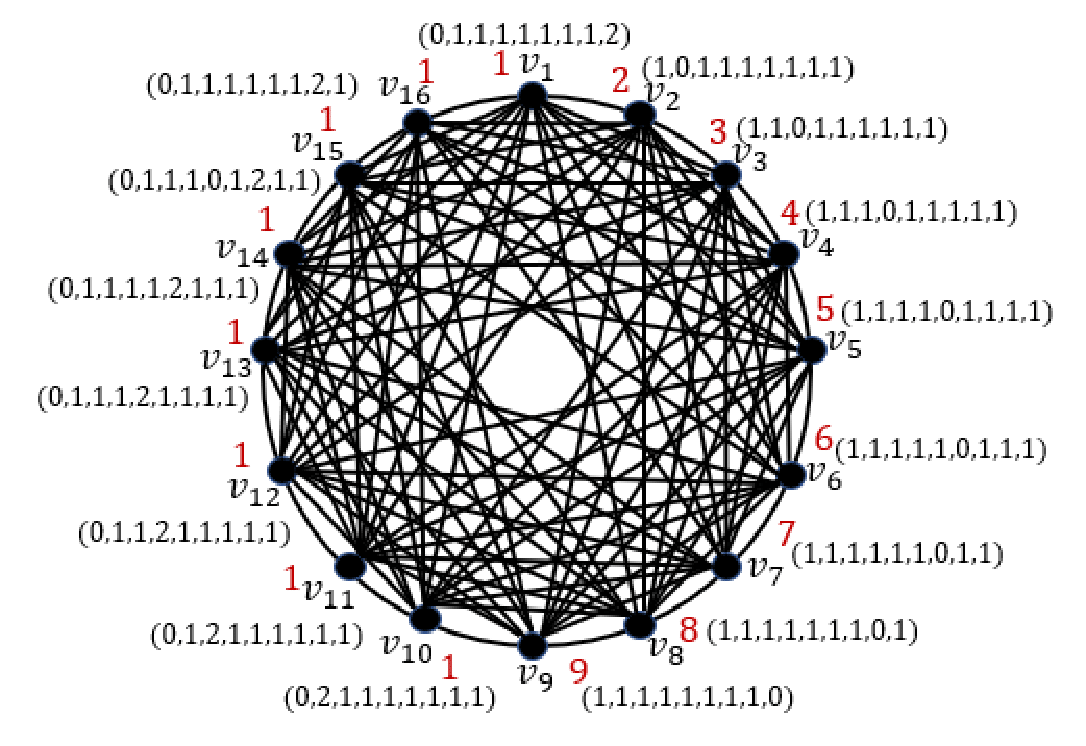}
\label{fig8}
\end{figure}

\subsubsection{The Locating Rainbow Connection Number of $(n,3)$-Regular Graphs}
\label{Subsec:3}

The $(n,3)$-regular graph is obtained by removing a cycle from a $(n,1)$-regular graph (see Figure \ref{fig7}). It is easily observed that the $(n,3)$-regular graph is connected if and only if $n\geq 5$. The $(n,3)$-regular graph has the following edges and vertices.  $V(R_{(n,3)})=\{v_i | i\in [1,n]\}$, $E(R_{(n,3)})=\{v_iv_j|i\in[2,n], j\in[1,n], i\neq j, j= (i+1)~mod~n, j\neq i-1\} \cup \{v_iv_j|i=1, j\in [3,n-1]\}$.
\begin{figure}[h]
\centering
\caption{(a) $R_{(12,1)}$, (b) $R_{(12,3)}$}
\includegraphics[width=4in]{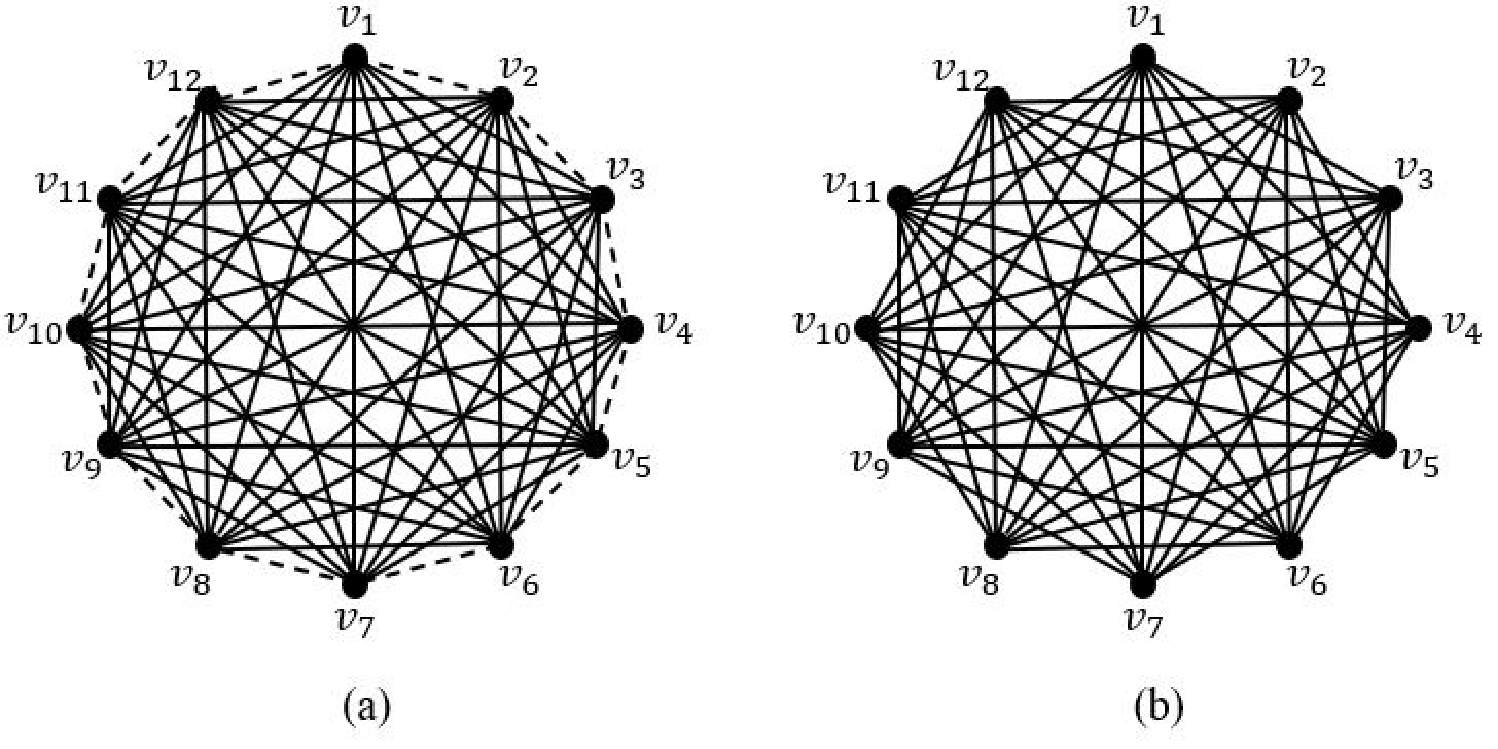}
\label{fig7}
\end{figure}

In Theorem \ref{teo5}, we determine the locating rainbow connection number of the graph $R_{(n,3)}$.

\begin{teorema}
Let $n$ be a positive integer with $n\geq 5$. If $R_{(n,3)}$ is a regular graph of order n with all vertices having a degree of $n-3$, then. 	\begin{center}
	$\begin{array}{ccl}
		rvcl(R_{(n,3)})&=&\left\{
		\begin{array}{ll}
			\lceil\frac{n}{2}\rceil-\lceil\frac{n}{10}\rceil+2, & \hbox{for $n\equiv 2 \pmod{10}$ or $n\equiv 4 \pmod{10}$;}\\
			\lceil\frac{n}{2}\rceil-\lceil\frac{n}{10}\rceil+1, & \hbox{for others.}
		\end{array}
		\right.
	\end{array}$
\end{center}{\tiny }
\label{teo5}
\end{teorema}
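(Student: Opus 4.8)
The plan is to realize $R_{(n,3)}$ as the complement $\overline{C_n}$ of the cycle $v_1v_2\cdots v_nv_1$, so that the only non-adjacent pairs are $\{v_i,v_{i+1}\}$ (indices mod $n$) and $diam(R_{(n,3)})=2$ for $n\ge 5$. The first step is to pin down the shape of a rainbow code: for a partition $\Pi=\{R_1,\dots,R_r\}$ and a vertex $v_i$ one has $d(v_i,R_w)=2$ exactly when $\emptyset\ne R_w\subseteq\{v_{i-1},v_{i+1}\}$, so $rc_\Pi(v_i)$ is $0$ in coordinate $c(v_i)$, is $2$ precisely on the set $F(v_i):=\{w:\emptyset\ne R_w\subseteq\{v_{i-1},v_{i+1}\}\}$ (which by Lemma~\ref{co2} has size at most two and is contained in $\{c(v_{i-1}),c(v_{i+1})\}$), and is $1$ elsewhere. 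Hence a locating rainbow $r$-coloring is exactly a vertex coloring with $r$ colors for which the pairs $(c(v_i),F(v_i))$ are pairwise distinct, the rainbow-vertex requirement being automatic since every $2$-path is rainbow when $diam=2$.

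For the lower bound I would argue by contradiction, assuming an $r$-coloring with $r$ equal to the claimed value minus one. Let $s$ be the number of colors used exactly once. One observes that a color class can appear as some $F(v)$ only if it is a singleton or a ``short pair'' $\{v_j,v_{j+2}\}$; a vertex with $F(v)\ne\emptyset$ must be a cycle-neighbour of a singleton or the centre of a short pair; all vertices with $F(v)=\emptyset$ receive pairwise distinct colors; and inside every class of size $\ge 2$ the $F$-values are pairwise distinct with at most one of them empty. Combining these with the observation that the $s$ singletons split $C_n$ into $s$ arcs --- an arc of length $\ell$ producing $\max(\ell-2,0)$ interior vertices with $F=\emptyset$, hence that many extra colors --- one optimises over $s$ and over the arc-length distribution to obtain $n\le f(r)$ for $f$ matching the stated formula. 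The residues $n\equiv 2,4\pmod{10}$ should emerge as exactly the cases where the optimal arc pattern cannot be distributed evenly around the cycle, forcing one extra color and thus the ``$+2$'' instead of ``$+1$''.

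For the upper bound I would give an explicit coloring, treating $n$ by its residue mod $10$ (and mod $2$): place singleton colors at a periodic set of positions, and on each arc between consecutive singletons distribute a fixed small set of shared colors in a prescribed pattern so that the two arc-end vertices acquire distinct singleton-generated entries $2$ while the interior vertices of distinct arcs get distinct shared colors. One then checks, as in Theorem~\ref{thm1} and Theorem~\ref{mainteo1}, that $r$ colors suffice and that all pairs $(c(v_i),F(v_i))$ are distinct; the exceptional residues would use a slightly perturbed pattern employing one more color.

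The step I expect to be the genuine obstacle is the exact count in the lower bound: converting the soft structural inequalities above into the precise value $\lceil n/2\rceil-\lceil n/10\rceil+1$ (resp.\ $+2$) requires identifying the extremal configuration exactly --- how many singletons, how they are spaced, how the arcs are colored --- and ruling out every competing configuration, and in particular explaining cleanly why $n\equiv 2$ and $n\equiv 4\pmod{10}$ behave differently from the remaining residues. This appears to demand a careful case analysis on the arc structure rather than a single clean counting identity.
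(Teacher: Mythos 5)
Your structural setup is correct and in substance the same as the paper's: $R_{(n,3)}$ is the complement of a Hamiltonian cycle, it has diameter $2$ so the rainbow-vertex condition is automatic, and each rainbow code is determined by the pair $(c(v_i),F(v_i))$ with $F(v_i)\subseteq\{c(v_{i-1}),c(v_{i+1})\}$ of size at most two --- this is exactly Lemma~\ref{co2} specialized to $t=3$, and your observations about singleton classes, short pairs $\{v_j,v_{j+2}\}$, and the distinctness of colors among vertices with $F(v)=\emptyset$ are all sound. The gap is that the two steps which actually constitute Theorem~\ref{teo5} are left as intentions rather than arguments. For the lower bound you say one ``optimises over $s$ and over the arc-length distribution'' to get $n\le f(r)$, and that the residues $2,4\pmod{10}$ ``should emerge'' as the exceptional ones; but that optimisation \emph{is} the theorem, and you flag it yourself as the unresolved obstacle --- nothing in the proposal derives the threshold $\lceil\frac{n}{2}\rceil-\lceil\frac{n}{10}\rceil+1$ or explains why exactly those two residues cost one extra color. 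The paper's lower bound, terse as it is, at least runs a count from Lemma~\ref{co2}: at most $r$ codes with no entry $2$, at most two vertices per color attaining entry $2$ in that coordinate, plus an analysis of how two such color classes can share vertices, concluding that with one color fewer than claimed the number of available distinct codes falls below $n$.

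For the upper bound you likewise give only the shape of a construction (``singleton colors at a periodic set of positions'', ``a prescribed pattern'') without specifying the pattern, the number of colors it uses as a function of $n$, or a verification that the pairs $(c(v_i),F(v_i))$ are pairwise distinct. The paper's construction is explicit: the singleton colors $2,\dots,r-1$ are placed at $v_4,v_6,v_9,v_{11},\dots$, i.e.\ two singletons in every block of five consecutive vertices, $c(v_n)=r$, with a small adjustment near $v_n$ depending on the parity of $r$ and the leftover length, and all remaining vertices colored $1$; one then checks that every color-$1$ vertex is distinguished by some coordinate equal to $2$. Until you exhibit such a concrete pattern (including the modified pattern for $n\equiv 2,4\pmod{10}$) and carry out the extremal count in the lower bound, your text is a plausible plan that matches the paper's strategy but does not yet prove the stated formula.
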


\begin{proof}
The proof is partitioned into two cases as outlined below.
\begin{enumerate}
	\item $n\in \{5,6\}$\\
	Based on Lemma \ref{lemamemuatliangkaran}, Theorem \ref{theoremdiam}, and Figure \ref{R56}, we obtain $rvcl(R_{(n,3)})=3$.
	
	\begin{figure}[h]
		\centering
		\caption{$R_{(5,3)}$, $R_{6,3}$.}
		\includegraphics[width=3.3in]{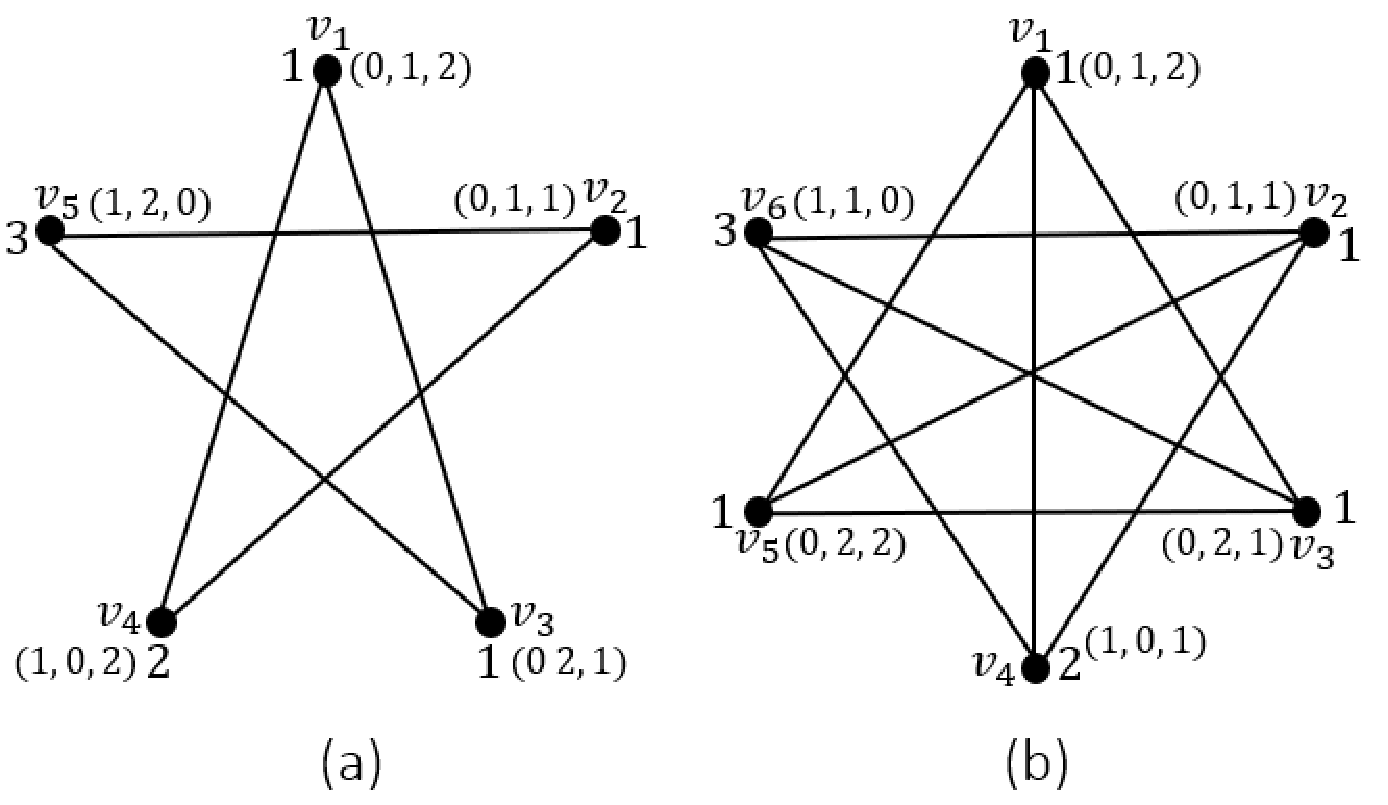}
		\label{R56}
	\end{figure}
	
	\item $n\geq 7$\\
	Suppose $rvcl(R_{(n,3)})= \lceil\frac{n}{2}\rceil-\lceil\frac{n}{10}\rceil+1$ for $n\equiv 2 \pmod{10}$ or $n\equiv 4 \pmod{10}$. Based on Lemma  \ref{co2}, the number of rainbow codes that do not include entry $2$ is at most $\lceil\frac{n}{2}\rceil-\lceil\frac{n}{10}\rceil+1$.
	
	Since $rvcl(R_{(n,3)})<\lceil\frac{n}{2}\rceil$, there must be at least one color used at least three times. Thus, according to Lemma \ref{co2}(3), only $\lceil\frac{n}{2}\rceil-\lceil\frac{n}{10}\rceil$ sets of colors result in an entry of $2$ in a rainbow code. Furthermore, for two pair sets of colors $R_a$ and $R_b$ that produce entry $2$ in a rainbow code, there exist three distinct vertices $u, v, w$ such that $d(u, R_a) = 2$ and $d(u, R_b) = 1$, $d(v, R_a) = 1$ and $d(v, R_b) = 2$, and $d(w, R_a) = d(w, R_b) = 2$.
	
	Therefore, the maximum number of distinct rainbow codes less than $\lceil\frac{n}{2}\rceil-\lceil\frac{n}{10}\rceil+1$, which leads to a contradiction. Similarly, for other values of $n$, contradictions are obtained. Therefore, $rvcl(R_{(n,3)})\geq  \lceil\frac{n}{2}\rceil-\lceil\frac{n}{10}\rceil+2$ for $n\equiv 2 \pmod{10}$ or $n\equiv 4 \pmod{10}$, and $	rvcl(R_{(n,3)})\geq \lceil\frac{n}{2}\rceil-\lceil\frac{n}{10}\rceil+1$ for other values of $n$.
	
	Furthermore, we will show that $rvcl(R_{(n,3)})\leq  \lceil\frac{n}{2}\rceil-\lceil\frac{n}{10}\rceil+2$ for $n\equiv 2 \pmod{10}$ or $n\equiv 2 \pmod{10}$, and $rvcl(R_{(n,3)})\leq 	\lceil\frac{n}{2}\rceil-\lceil\frac{n}{10}\rceil+1$ for other values of $n$. using the following coloring steps. For simplification, $r$ is assumed to represent the number of colors given to a graph $R_{(n,3)}$.
	\begin{enumerate}
		\item  Assign colors $2,3,...,r-1$ to the vertices $v_{4+(i-1)5},v_{6+(i-1)5}$ sequentially for $i\in [1,\frac{r-2}{2}]$ and even $r$.
		\item Assign colors $2,3,...,r-1$ to the vertices $v_{4+(i-1)5},v_{6+(i-1)5}$ sequentially for $i\in [1,\frac{r-3}{2}]$ and even $r$.
		\item In the graph colored with even $r$, set $c(v_n)=r$.
		\item In the graph colored with odd $r$, set $c(v_n)=r$. Furthermore, if $n-(6+(\frac{r-3}{2}-1)5) \in \{3,4\}$, then  $c(v_{n-1})=r-1$, and if $n-(6+(\frac{r-3}{2}-1)5) =5$, then $c(v_{n-2})=r-1$.
		\item 	Finally, color all remaining uncolored vertices with color $1$.
	\end{enumerate}
	
	The diameter of the graph $R_{(n,3)}$ is known to be $2$, indicating that with the given coloring rules, there will always be a rainbow path connecting any two vertices in the graph $R_{(n,3)}$.
	
	Furthermore, the colors $2,3,...,r$ that are assigned only once to the vertices in the graph $R_{(n,3)}$ result in all vertices assigned with these colors having distinct rainbow codes. For color $1$, $c(v_1)=1$ and it is adjacent to the vertices colored with $2,3,...,r$. Additionally, for any two distinct vertices $v_i$ and $v_j$ with $c(v_i)=c(v_j)=1$ for $i,j\in [2,n]$, there is always at least one set of colors $R_a$ for $a\neq 1$, such that $d(v_i,R_a)=2$ and $d(v_j,R_a)=1$. As a result, all vertices in the graph $R_{(n,3)}$ have distinct rainbow codes. Therefore, we obtain $rvcl(R_{(n,3)})= \lceil\frac{n}{2}\rceil-(\lceil\frac{n}{10}\rceil-2)$ for $n\equiv 2 \pmod{10}$ or $n\equiv 2 \pmod{10}$, and $	rvcl(R_{(n,3)})= \lceil\frac{n}{2}\rceil-(\lceil\frac{n}{10}\rceil-1)$ for other values of $n$.
	
\end{enumerate}
\end{proof}

From the Theorem \ref{teo5}, it is obtained that there are three or two graphs with the same locating rainbow connection number. In Figure \ref{R456}, it is shown that $R_{(9,3)}, R_{(10,3)}, R_{(11,3)}$ have the same locating rainbow connection number, which is $5$. Furthermore, Figure \ref{R7-22} indicates that $R_{(12,3)}, R_{(13,3)}$ share the same locating rainbow connection number, which is $6$.

\begin{figure}[h]
\centering
\caption{Rainbow codes of (a) $R_{(9,3)}$, (b) $R_{(10,3)}$, and (c) $R_{(11,3)}$.}
\includegraphics[width=4.7in]{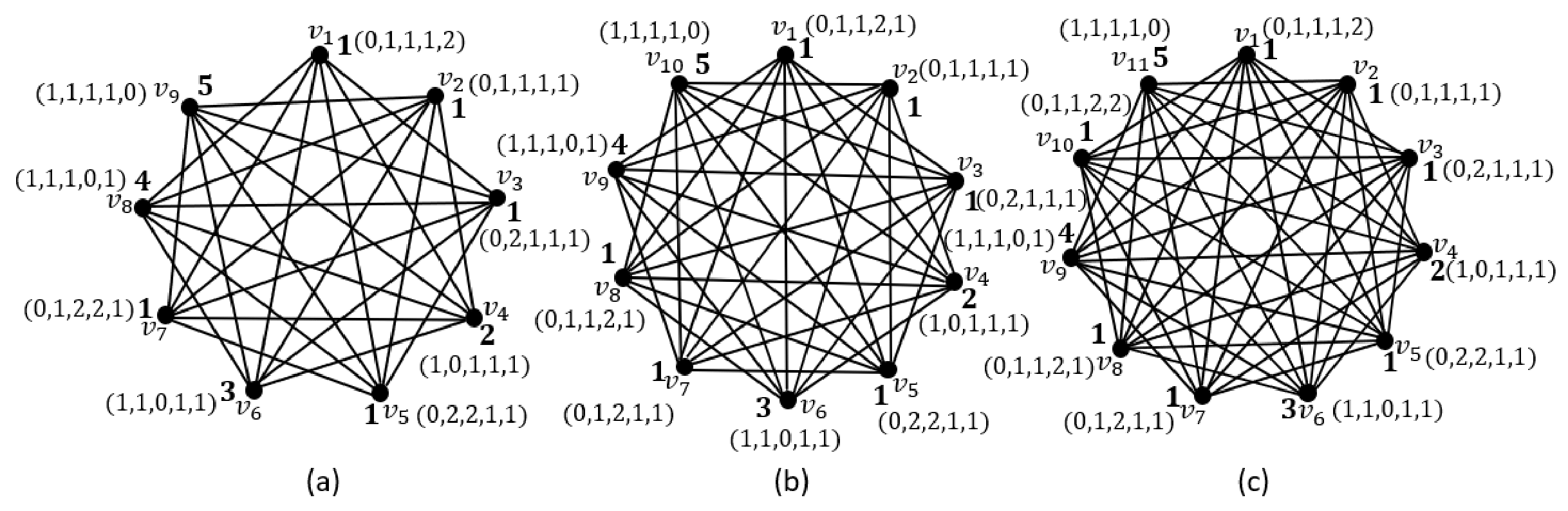}
\label{R456}
\end{figure}

\begin{figure}[h]
\centering
\caption{Rainbow codes of (a) $R_{(12,3)}$ and (b) $R_{(13,3)}$.}
\includegraphics[width=4.5in]{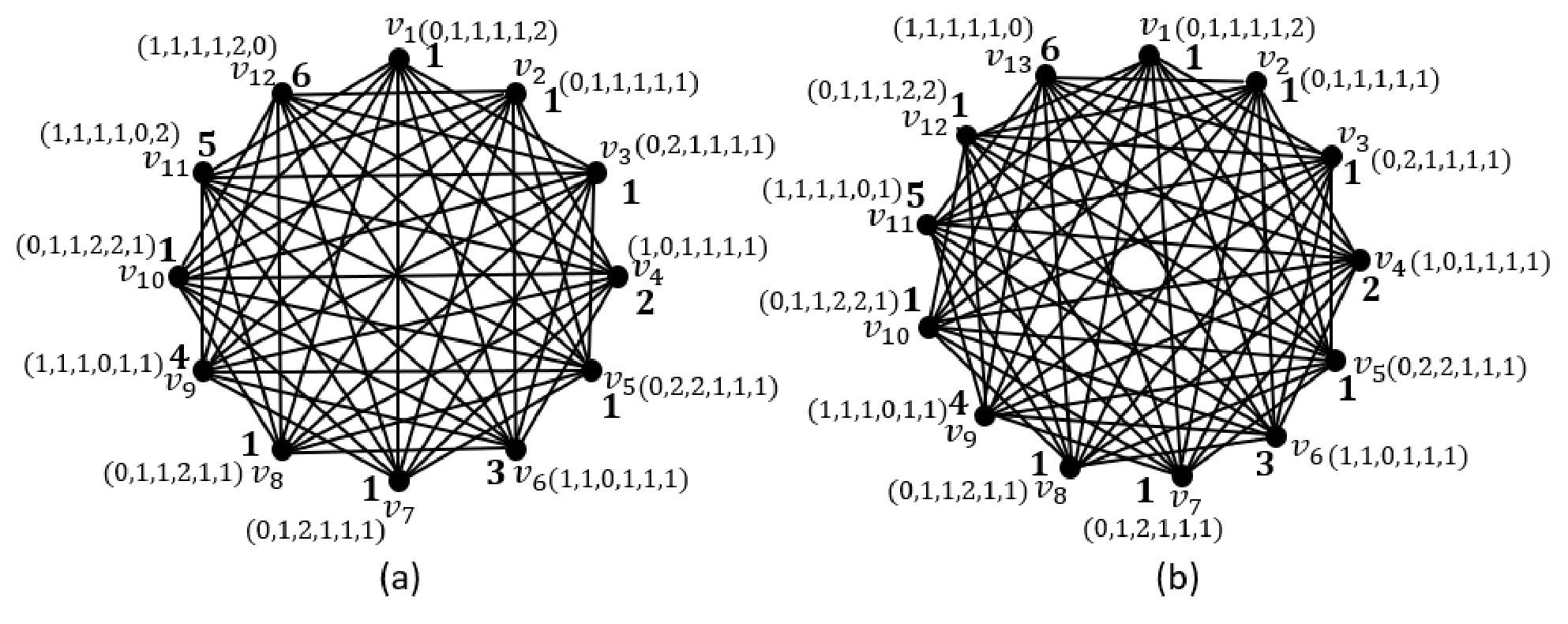}
\label{R7-22}
\end{figure}

All regular graphs discussed in this research are isomorphic within their respective classes. Therefore, to conclude this research, we propose an open problem for future researchers regarding the determination of the locating rainbow connection number for classes of $(n, t)$-regular graphs with $4 \leq t \leq n-3$. This problem is related to determining whether these graph classes form a set of non-isomorphic graphs within their respective classes.

\section*{Acknowledgements}

The authors would like to thank LPDP from the Ministry of Finance Indonesia, the Ministry of Education, Culture, Research, and Technology, and Institut Teknologi Bandung for their support in this research.


\begin{thebibliography}{9}
	
	\bibitem{chartrand2002locating} Chartrand, G., Johns, G. L., McKeon, K. A., and Zhang, P. (2008). Rainbow connection in graphs. Mathematica bohemica, 133(1), 85-98.
	
	\bibitem{ericksen2007matter} Ericksen, A. B. (2007). A matter of security. Graduating Engineer and Computer Careers, 24, 28.
	
	\bibitem{fitriani2022rainbow} Fitriani, D., Salman, A., and Awanis, Z. Y. (2022). Rainbow connection number of comb product of graphs. Electronic Journal of Graph Theory and Applications (EJGTA), 10(2), 461-474.
	
	\bibitem{kumala2015rainbow} Kumala, I. S., and Salman, A. N. M. (2015). The rainbow connection number of a flower (Cm, Kn) graph and a flower (C3, Fn) graph. Procedia Computer Science, 74, 168-172.
	
	\bibitem{nabila2015rainbow} Nabila, S., and Salman, A. N. M. (2015). The rainbow connection number of origami graphs and pizza graphs. Procedia Computer Science, 74, 162-167.
	
	\bibitem{susanti1ab2020rainbow} Susanti1ab, B. H., Salmana, A. N. M., and Simanjuntaka, R. (2020). The rainbow 2-connectivity of Cartesian pro-ducts of 2-connected graphs and paths. Electronic Journal of Graph Theory and Applications, 8(1), 145-156.
	
	\bibitem{umbara2023inverse} Umbara, R. F., Salman, A. N. M., and Putri, P. E. (2023). On the inverse graph of a finite group and its rainbow connection number. Electronic Journal of Graph Theory and Applications, 11(1).
	
	\bibitem{krivelevich2010rainbow} Krivelevich, M., and Yuster, R. (2010). The rainbow connection of a graph is (at most) reciprocal to its minimum degree. Journal of Graph Theory, 63(3), 185-191.
	
	\bibitem{bustan2018rainbow} Bustan, A. W., and Salman, A. N. M. (2018). The rainbow vertex-connection number of star fan graphs. CAUCHY: Jurnal Matematika Murni dan Aplikasi, 5(3), 112-116.
	
	\bibitem{simamora2015rainbow} Simamora, D. N., and Salman, A. N. M. (2015). The rainbow (vertex) connection number of pencil graphs. Procedia Computer Science, 74, 138-142.
	
	\bibitem{bustan2021locating} Bustan, A. W., Salman, A. N. M., and Putri, P. E. (2021, February). On The Locating Rainbow Connection Number of A Graph. In Journal of Physics: Conference Series (Vol. 1764, No. 1, p. 012057). IOP Publishing.
	
	\bibitem{bustan2023locating} Bustan, A. W., Salman, A. N. M., Putri, P. E., and Awanis, Z. Y. (2023). On the Locating Rainbow Connection Number of Trees and Regular Bipartite Graphs. Emerging Science Journal, 7(4), 1260-1273.
	
	\bibitem{bustan1} Bustan, A. W., Salman, A. N. M., and Putri, P. E. (2023, July). On the locating rainbow connection number of amalgamation of complete graphs. In Journal of Physics: Conference Series (Vol. 2543, No. 1, p. 012004). IOP Publishing.
	
	\bibitem{chiang19952} Chiang, W. K., and Chen, R. J. (1995). The (~ 2, Q-star graph: A generalized star graph. Information Processing Letters, 56, 259-264.
	
	\bibitem{li2014tight} Li, X., and Liu, S. (2014). Tight upper bound of the rainbow vertex-connection number for 2-connected graphs. Discrete Applied Mathematics, 173, 62-69.

	
\end{thebibliography}
\end{document}